\documentclass[12pt]{amsart}

\usepackage{amssymb,amsfonts,amsthm,amsmath}
\usepackage{graphicx}
\usepackage{enumerate}
\usepackage[letterpaper,ignoreall,margin=1in]{geometry}
\usepackage[colorlinks]{hyperref}

\usepackage{booktabs}

\newtheorem*{thm*}{Theorem}
\newtheorem{thm}{Theorem}
\newtheorem{cor}[thm]{Corollary}
\newtheorem{prop}[thm]{Proposition}

\theoremstyle{remark}

\theoremstyle{definition}

\newtheorem{lem}[thm]{Lemma}



\newcommand{\Z}{\mathbb{Z}}

\newcommand{\C}{\mathbb{C}}


\DeclareMathOperator{\GL}{GL}
\DeclareMathOperator{\Hom}{Hom}

\DeclareMathOperator*{\Sym}{Sym}

\newcommand{\defining}[1]{\textbf{#1}}

\title{Power sum decompositions of defining equations of reflection arrangements}
\author{Zach Teitler}
\email{zteitler@boisestate.edu}
\address{Zach Teitler \\
Department of Mathematics \\
1910 University Drive \\
Boise State University \\
Boise, ID 83725-1555 \\
USA}
\author{Alexander Woo}
\email{awoo@uidaho.edu}
\address{Alexander Woo \\
PO Box 441103 \\
Unversity of Idaho \\
Moscow, ID 83844-1103 \\
USA}

\date{\today}
\subjclass[2010]{Primary: 15A21, 14N15; Secondary: 20F55, 13A50, 15A69}
\keywords{Waring rank, reflection group, skew invariant, reflection arrangement}

\usepackage{amsfonts, amssymb, latexsym, youngtab}

\DeclareMathOperator{\alt}{alt}

\DeclareMathOperator*{\sgn}{sgn}

\begin{document}

\bibliographystyle{amsalpha}       

\begin{abstract}
We determine the Waring rank and a Waring decomposition of the fundamental skew invariant
of any complex reflection group whose highest degree is a regular number.
This includes all irreducible real reflection groups.
\end{abstract}

\maketitle


Given a homogeneous polynomial $f$ of degree $d$, the {\bf Waring rank} of
$f$, denoted $r(f)$, is the smallest positive integer $r$ such that
there exist linear forms $\ell_1,\ldots,\ell_r$ with
$f=\ell_1^d+\cdots+\ell_r^d$,
and a {\bf Waring decomposition} is such an expression with length $r=r(f)$.
For example, $xy = \frac{1}{4} ( (x+y)^2 - (x-y)^2 )$, so $r(xy) \leq 2$,
and
\[
  xyz = \frac{1}{24} \Big( (x+y+z)^3 - (x+y-z)^3 - (x-y+z)^3 + (x-y-z)^3 \Big),
\]
so $r(xyz) \leq 4$.
Waring ranks are notoriously difficult to determine.
Even over 160 years after Sylvester's work \cite{Sylvester:1851kx} studying this question,
Waring ranks are known
only for a few families and particular cases of examples.
For a quadratic form, the rank is equal to the rank of the associated symmetric matrix.
Ranks of binary forms have been known since the 19th century,
see \cite{Sylvester:1851kx,MR2754189}.
Ranks of plane cubics such as $xyz$ have been determined by different methods several times,
such as by \cite{MR1506892} in the 1930s, \cite{comonmour96},
and \cite{Landsberg:2009yq}.
From these results, one can see that that in fact $r(xy)=2$ and $r(xyz)=4$.

A theorem of Alexander and Hirschowitz~\cite{MR1311347} gives the rank of a
sufficiently general polynomial of degree $d$ in $n$ variables, that is,
one lying in a dense open subset of the space of polynomials.
However, it is not known how to determine whether a given form is indeed general,
nor is it known in general how to determine the rank of a given form with any reasonable efficiency.
Hence there has been a body of recent work determining the rank of certain polynomials of special interest.
Some specific ranks were determined in \cite{Landsberg:2009yq},
such as $r(x(y_1^2+\dotsb+y_m^2))=r(x(y_1^2+\dotsb+y_m^2+x^2))=2m$,
$r(x^2yz)=6$, and $r(xyzw)=8$.
A result of Ranestad and Schreyer \cite{MR2842085} gives a lower bound for rank
which turns out to be tight for monomials of the form $(x_1 \dotsm x_n)^d$;
they have rank $(d+1)^{n-1}$.
More generally, the Waring ranks of arbitrary monomials were found by
Carlini, Catalisano, and Geramita \cite{Carlini20125}:
$r(x_1^{a_1} \dotsm x_n^{a_n}) = (a_2+1)\dotsm(a_n+1)$ when $a_1 \leq \dotsb \leq a_n$.
They also found the ranks of sums of pairwise coprime monomials.
Among these examples are a few sporadic cases of polynomials having greater rank than that of a general polynomial with the same number of variables and degree;
see, for example, \cite[Section 4.1]{Carlini20125}, \cite[Remark 7.3]{Landsberg:2009yq}.

One class of polynomials of great interest, generalizing the case of
monomials, is the class of products of linear forms, possibly with multiplicities.
Geometrically, the vanishing loci of these polynomials are hyperplane
(multi)-arrangements.  Even in this case, the only products of linear
forms for which the Waring rank was previously known are monomials
and binary forms.

In this paper, we consider the product of linear forms defining
the multi-arrangement associated to a complex reflection group
satisfying the technical hypothesis
that the highest degree is a regular number
(order of a regular element; see section \ref{section: regular numbers}).
In particular, our results hold for the reflection arrangement of
any irreducible real reflection group.
For example, the symmetric group
$S_n$ acts on $\C^n$ as a reflection group with the transposition
$t_{i,j}$ acting by reflecting across the hyperplane $H_{i,j}$ defined
by $x_i - x_j = 0$.  The defining equation of the union of these
hyperplanes is $V_n = \prod_{i<j} (x_i - x_j)$, the classical
Vandermonde determinant.  This is a form of degree $\binom{n}{2}$ in
$n$ variables.
We show that $r(V_n) = (n-1)!$.

We briefly sketch our proof here for the case of the Vandermonde
determinant.  To obtain an expression for $V_n$ as a sum of powers of
linear forms, one may start naively with a general linear form $\ell$ and
skew-symmetrize the power $\ell^{\binom{n}{2}}$ to give an expression
of $V_n$ as a signed sum of powers involving $|S_n| = n!$ terms.
We improve this by choosing $\ell$ to be a regular eigenvector of a Coxeter
element of $S_n$, namely a linear form with the $n$-th roots of unity as its coefficients.
With this choice, each term in the expression of $V_n$ as a sum of $n!$ powers
turns out to be repeated $n$ times.
This reduces the number of distinct terms to $(n-1)!$,
so $r(V_n) \leq (n-1)!$.
A lower bound for $r(V_n)$ is given by combining the
Ranestad--Schreyer result with well-known facts about the invariant
theory of $S_n$, yielding $r(V_n) \geq (n-1)!$.

The same strategy works for any real or
complex reflection group whose largest degree is a regular
number.  Our main theorem is the following.
\begin{thm}
Let $W$ be a finite complex reflection group acting on $\C^n$
with degrees $d_1 \leq \dotsb \leq d_n$.
Suppose that the largest degree $d_n$ of $W$ is a regular number.
(In particular, this holds for any $W$ that is an irreducible real reflection group.)
Let $f$ be the defining equation of the reflection
multi-arrangement for $W$.
Then $r(f) = d_1 \dotsm d_{n-1} = |W|/d_n$.
\end{thm}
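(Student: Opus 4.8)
The plan is to prove the two inequalities $r(f)\le |W|/d_n$ and $r(f)\ge |W|/d_n$; since $|W|=d_1\dotsm d_n$, together they give $r(f)=|W|/d_n=d_1\dotsm d_{n-1}$. Recall the structure I will use. Up to a nonzero scalar $f$ is the fundamental skew invariant of $W$ on $V=\C^n$, i.e.\ the Jacobian $\det(\d f_i/\d x_j)$ of a system $f_1,\dotsc,f_n$ of basic invariants; thus $f$ is homogeneous of degree $D:=\sum_i(d_i-1)$ and transforms by a linear character $\psi$ of $W$, and the $\psi$-relative invariants form a free rank-one module over $\C[V]^W=\C[f_1,\dotsc,f_n]$ with generator $f$, hence occupy the line $\C f$ in degree $D$. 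The dual representation $V^*$ has the same degrees; write $\tilde f$ for its fundamental skew invariant, which has degree $D$, transforms by $\psi^{-1}$, and vanishes exactly on the dual arrangement.

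For the upper bound I would symmetrize a power of a linear form. For $\ell\in V^*$ set $\Phi(\ell):=\sum_{w\in W}\psi(w)^{-1}(w\cdot\ell)^D$ (which is $|W|$ times the $\psi$-isotypic projection of $\ell^D$); one checks $\Phi(w'\cdot\ell)=\psi(w')\Phi(\ell)$, so $\Phi(\ell)$ is a $\psi$-relative invariant of degree $D$ and therefore $\Phi(\ell)=c(\ell)\,f$ for a scalar $c(\ell)$, homogeneous of degree $D$ in $\ell$. The same identity shows that $c$, viewed as a form on $V^*$, is a $\psi^{-1}$-relative invariant of degree $D$, so $c=\lambda\tilde f$ for a constant $\lambda$; and $\lambda\neq 0$ because the $\psi$-isotypic projection does not vanish on all of $\{\ell^D\}$ (its image contains $f$), so $\Phi\not\equiv 0$. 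Now use the hypothesis: since $d_n$ is a regular number, Springer's theory gives a regular element $g\in W$ of order $d_n$ with a regular eigenvector $v\in V$, say $g v=\zeta v$ with $\zeta$ a primitive $d_n$-th root of unity. Fix a $W$-invariant Hermitian inner product $\langle\,,\,\rangle$ on $V$ and let $\ell_0\in V^*$ be the form $x\mapsto\langle v,x\rangle$; then $g\cdot\ell_0=\zeta^{-1}\ell_0$, and $\ell_0$ misses every hyperplane of the dual arrangement since $v$ misses every hyperplane of the original one (a unitary reflection fixes the orthogonal complement of its root line), so $c(\ell_0)=\lambda\tilde f(\ell_0)\neq 0$. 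Finally partition $W$ into its $|W|/d_n$ left cosets of $\langle g\rangle$. Evaluating $g\cdot f=\psi(g)f$ at $v$ (where $f(v)\neq 0$) gives $\psi(g)=\zeta^{-D}$, so within each coset the summand of $\Phi(\ell_0)$ indexed by $w g^j$ equals $\psi(g)^{-j}\zeta^{-jD}$ times the summand indexed by $w$; since $\psi(g)^{-j}\zeta^{-jD}=1$, all $d_n$ summands in a coset coincide and
\[
  f \;=\; \frac{1}{c(\ell_0)}\,\Phi(\ell_0) \;=\; \frac{d_n}{c(\ell_0)}\sum_{[w]\in W/\langle g\rangle}\psi(w)^{-1}(w\cdot\ell_0)^D
\]
exhibits $f$ as a sum of at most $|W|/d_n$ powers of linear forms, so $r(f)\le |W|/d_n=d_1\dotsm d_{n-1}$.

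For the lower bound I would identify the apolar algebra $A:=\C[\d_1,\dotsc,\d_n]/f^{\perp}$. Let $g_1,\dotsc,g_n$ be basic invariants of $W$ on $V^*$, of degrees $d_1,\dotsc,d_n$, acting as constant-coefficient differential operators. Each $g_i(\d)\circ f$ is a $\psi$-relative invariant of degree $D-d_i<D$, hence zero, so $(g_1,\dotsc,g_n)\subseteq f^{\perp}$ and there is a graded surjection from the coinvariant algebra $\C[\d_1,\dotsc,\d_n]/(g_1,\dotsc,g_n)$ onto $A$. That coinvariant algebra is Artinian Gorenstein of length $|W|$ with socle in its top degree $D$; since $f\neq 0$ has degree $D$ and is apolar to the $g_i$, and a proper graded quotient of a Gorenstein Artinian algebra has strictly smaller socle degree, the surjection must be an isomorphism. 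Thus $f^{\perp}=(g_1,\dotsc,g_n)$ and $\dim_{\C}A=|W|$. This ideal is a complete intersection minimally generated in degrees $d_1,\dotsc,d_n$, so the Ranestad--Schreyer bound --- which, as in the case of $(x_1\dotsm x_n)^d$, gives $r(f)$ bounded below by $\dim_{\C}A$ divided by the top degree of a minimal generator of $f^{\perp}$ --- yields $r(f)\ge |W|/d_n$. Combining this with the upper bound finishes the proof.

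I expect the crux to be the non-vanishing $c(\ell_0)\neq 0$: it forces one both to recognize $c$ as a nonzero multiple of the dual skew invariant $\tilde f$, so that its zero set is exactly the dual arrangement, and to take $\ell_0$ with eigenvalue $\zeta^{-1}$ rather than $\zeta$, which is the reason a Hermitian rather than a symmetric bilinear form appears. The Springer-theoretic existence of the regular element of order $d_n$ and the precise form of the Ranestad--Schreyer inequality are the other load-bearing ingredients.
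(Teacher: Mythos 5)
Your proposal is correct and follows essentially the same route as the paper: the lower bound via Steinberg's theorem $f^\perp=(g_1,\dotsc,g_n)$ combined with the Ranestad--Schreyer bound, and the upper bound by skew-symmetrizing the $D$-th power of a regular eigenvector of a regular element of order $d_n$ and collapsing the sum over cosets of $\langle g\rangle$. The only (harmless) differences are cosmetic: the paper identifies $S$ with $\Sym(V)$ so the regular eigenvector is itself the linear form (avoiding your Hermitian-form transport to $V^*$), it deduces the coset collapse from the nonvanishing of the geometric sum rather than from your direct evaluation $\psi(g)=\zeta^{-D}$ at $v$, and it cites Steinberg's theorem where you re-derive it.
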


Additionally we show how to give explicit Waring decompositions in these cases.

We do not assume that $W$ acts essentially on $\C^n$.

Other notions of rank are of interest, including cactus rank, smoothable rank, and border rank;
see Section~\ref{section: background} for details.
The lower bound of Ranestad and Schreyer also applies to cactus rank and smoothable rank,
enabling us to determine the cactus rank and smoothable rank of the defining equation
of the reflection multi-arrangement for any finite complex reflection group.
In general, we have the following.
\begin{thm}
Let $W$ be a finite complex reflection group
with degrees $d_1 \leq \dotsb \leq d_n$,
and greatest regular number $D$.
Let $f$ be the defining equation of the reflection multi-arrangement for $W$.
Then the Waring rank $r(f)$, cactus rank $cr(f)$, and smoothable rank $sr(f)$
satisfy $d_1 \dotsm d_{n-1} = |W|/d_n = cr(f) = sr(f) \leq r(f) \leq |W|/D$.
\end{thm}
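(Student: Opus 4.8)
The plan is to assemble four ingredients, most of which already occur in the analysis behind the first theorem: an identification of the apolar ideal $f^{\perp}$, the Ranestad--Schreyer lower bound, an explicit apolar complete intersection realizing the cactus and smoothable ranks, and the skew-symmetrization construction with a regular element of maximal order. Write $S=\C[y_1,\dots,y_n]$ for the ring of differential operators acting on $\C[x_1,\dots,x_n]$, let $\theta_1,\dots,\theta_n\in S^W$ be basic invariants of degrees $d_1,\dots,d_n$, and set $N=\deg f=\sum_i(d_i-1)$. The key invariant-theoretic input is that $f^{\perp}=(S^W_+)=(\theta_1,\dots,\theta_n)$. Indeed, a positive-degree invariant operator carries $f$ (which up to scalar is the Jacobian $\prod_H\alpha_H^{e_H}$ of the basic invariants, transforming by $\det$) to a relative invariant of the same character and strictly smaller degree, and there is none, since $f$ is the minimal-degree relative invariant of its character; so $(S^W_+)\subseteq f^{\perp}$. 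Equality follows from a socle argument: $S/(S^W_+)$ (the coinvariant algebra) and $S/f^{\perp}$ are both Artinian Gorenstein with one-dimensional socle in degree $N$, and a surjection between them that is not an isomorphism would have a kernel containing a socle element, hence containing all of degree $N$, contradicting $f\ne 0$. Thus $f^{\perp}$ is a complete intersection, $\dim_{\C}(S/f^{\perp})=d_1\dotsm d_n=|W|$, generated in degrees $\le d_n$.

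The two bounds on cactus and smoothable rank then come nearly for free. For the lower bound, the Ranestad--Schreyer theorem (Section~\ref{section: background}) gives $cr(f)\ge\dim_{\C}(S/f^{\perp})/d_n=|W|/d_n=d_1\dotsm d_{n-1}$, which bounds $sr(f)$ and $r(f)$ below as well, since $cr(f)\le sr(f)\le r(f)$ always. For the upper bound, let $Z\subseteq\PP^{n-1}$ be the subscheme cut out by $\theta_1,\dots,\theta_{n-1}$: since $\theta_1,\dots,\theta_n$ is a regular sequence, any $n-1$ of them cut out a zero-dimensional complete intersection in $\PP^{n-1}$, so $Z$ has length $d_1\dotsm d_{n-1}$, and $I_Z=(\theta_1,\dots,\theta_{n-1})\subseteq f^{\perp}$, so $Z$ is apolar to $f$ and $cr(f)\le d_1\dotsm d_{n-1}$. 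A zero-dimensional complete intersection is moreover smoothable (perturb $\theta_i$ to $\theta_i-tg_i$ with $g_i$ general of the same degrees, producing a flat family with reduced general fiber), so $sr(f)\le d_1\dotsm d_{n-1}$ as well. Hence $cr(f)=sr(f)=d_1\dotsm d_{n-1}=|W|/d_n$.

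For $r(f)\le|W|/D$ I would run the Vandermonde construction with a regular element of order $D$ in place of $d_n$. Choose a regular element $g\in W$ of order $D$, the greatest regular number (it exists by Springer's theory, Section~\ref{section: regular numbers}), and a regular eigenvector $\ell$ with $g\cdot\ell=\zeta\ell$, where $\zeta$ is a primitive $D$-th root of unity. The polynomial $P=\sum_{w\in W}\det(w)^{-1}(w\cdot\ell)^N$ is a relative invariant of the same character as $f$ and of degree $N$, so $P=\mu(\ell)f$ for a scalar $\mu(\ell)$; as a function of $\ell$ the map $\mu$ is, up to scalar, the fundamental skew invariant of the dual (contragredient) reflection representation, so it vanishes exactly on the reflection arrangement, and $\mu(\ell)\ne 0$ because $\ell$ is regular. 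Comparing transformation laws also forces $\det(g)=\zeta^N$ (equivalently, this is Springer's eigenvalue formula for a regular element). Grouping $W$ into cosets $w_1\langle g\rangle,\dots,w_{|W|/D}\langle g\rangle$, the term of $P$ indexed by $w_jg^k$ equals $\det(w_j)^{-1}(\det(g)^{-1}\zeta^N)^k(w_j\cdot\ell)^N=\det(w_j)^{-1}(w_j\cdot\ell)^N$, so the $D$ terms in each coset coincide and $P=D\sum_j\det(w_j)^{-1}(w_j\cdot\ell)^N$. Absorbing the root-of-unity scalars $\det(w_j)^{-1}$ (and $\mu(\ell)^{-1}D$) into $N$-th powers exhibits $f$ as a sum of $|W|/D$ powers of linear forms, so $r(f)\le|W|/D$. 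Combined with $cr(f)=sr(f)=|W|/d_n\le r(f)$ and $D\le d_n$ (every regular number divides some degree), this yields the full chain.

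The main obstacle is the last step, and specifically the passage from $P$ being \emph{some} multiple of $f$ to its being a \emph{nonzero} multiple once the $|W|$ terms have collapsed: one must know both that the terms coincide in blocks of $D$ (equivalently, $\det(g)=\zeta^N$) and that the surviving $|W|/D$-term expression does not vanish. This is exactly where regularity of the eigenvector $\ell$ is used, through the identification of $\mu$ with the skew invariant of the dual representation. Establishing that identification, together with the existence of a $\zeta$-regular element of order $D$, is what must be imported from Springer's theory of regular elements; granting the structure of $f^{\perp}$ and the Ranestad--Schreyer inequality, the remaining steps are bookkeeping.
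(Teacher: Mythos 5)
Your proposal is correct and follows essentially the same route as the paper: Steinberg's identification $f^{\perp}=(S^W_+)$, the Ranestad--Schreyer bound for the lower estimate, the $(n-1)$-generator apolar complete intersection (smoothable because complete intersections are) for $cr(f)=sr(f)=|W|/d_n$, and skew-symmetrization of $\ell^N$ for a regular eigenvector $\ell$, with nonvanishing established by identifying the skew-symmetrization functional with the fundamental skew invariant of the dual representation. The only cosmetic differences are sign/convention choices ($\det$ versus $\det^{-1}$, so $\zeta^{N}$ versus $\zeta^{-N}$) and that the paper deduces the collapse of each coset's terms directly from the nonvanishing of the geometric series factor rather than invoking Springer's eigenvalue formula --- both are valid.
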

In particular, when $d_n$ is a regular number, $cr(f) = sr(f) = r(f) = |W|/d_n$.

We are not, however, able to determine the border ranks of the forms that we consider.

We discuss background both on the aforementioned theorem of Ranestad and Schreyer and on complex reflection groups
and their skew invariants in Section \ref{section: background}.
The proof of our theorem is given in Section \ref{section: main proof},
and Section \ref{section: examples} discusses specific examples of reflection groups.
It is worth noting that our proof is uniform across types and does not rely on the classification of reflection groups.

\section{Background}\label{section: background}

\subsection{The Ranestad--Schreyer lower bound}

Now we recall the theorem of Ranestad and Schreyer giving a lower
bound for the rank of a homogeneous polynomial
$f$ in $n$ variables.

Consider the polynomial ring $S=\mathbb{C}[x_1,\ldots,x_n]$.
The ring $T=\mathbb{C}[\partial_1,\ldots,\partial_n]$ acts on
$S$ by differentiation; explicitly, given nonnegative integer
vectors $\alpha,\beta\in\mathbb{N}^n$,
\[
  \mathbf{\partial}^\alpha(\mathbf{x}^\beta)
  = \left( \prod_{i=1}^n \alpha_i!\binom{\beta_i}{\alpha_i} \right) \mathbf{x}^{\beta-\alpha}
\]
if
$\beta-\alpha$ is a nonnegative vector, and
$\mathbf{\partial}^\alpha(\mathbf{x}^\beta)=0$ otherwise, with the
action extending $\mathbb{C}$-linearly.
This gives rise to the {\bf apolar pairing} between $S$ and $T$.
Letting $S_k$, $T_k$ denote the homogeneous pieces of degree $k$,
this gives a perfect pairing between $S_k$ and $T_k$ for each $k$,
and a map $S_d \otimes T_a \to S_{d-a}$ whenever $d \geq a$,
or for all $d,a$ with the understanding that $S_k = T_k = 0$ when $k<0$.
This makes $S$ a $T$-module;
it is a graded $T$-module when the grading is reversed
(that is, when each $S_k$ is given degree $-k$ rather than $k$).

Given $f\in S$, define
\[
  f^\perp = \{D\in T\mid D(f)=0\}.
\]
Then $f^\perp$ is an ideal of $T$, called the {\bf apolar ideal} of $f$.
The ring $A^f:=T/f^\perp$ is a zero-dimensional Gorenstein ring,
which is known as the {\bf apolar Artinian Gorenstein ring} for $f$.
If $f$ is a homogeneous polynomial, the ideal $f^\perp$ is homogeneous and $A^f$ is graded.
As a $T$-module, $A^f$ is isomorphic to the $T$-submodule of $S$
generated by $f$ (but note that the grading is reversed).

The well-known Apolarity Lemma \cite[Lemma 1.31]{MR1735271}
states that $f$ can be written as a sum of $s$ powers of linear forms if and only if
there is an ideal $I\subseteq f^\perp$ which is the homogeneous defining ideal of a set of $s$
distinct points in projective space.
Thus the Waring rank $r(f)$ is the least $s$ such that $f^\perp$ contains the ideal of some set of
$s$ reduced points in projective space.
Generalizing this notion, the \defining{cactus rank} (called \defining{scheme length} in \cite{MR1735271})
$cr(f)$ is the least $s$ such that $f^\perp$ contains the saturated ideal of some zero-dimensional
scheme of length $s$ in projective space.
The \defining{smoothable rank} $sr(f)$ is the least $s$ such that $f^\perp$ contains the saturated
ideal of some smoothable zero-dimensional scheme of length $s$ in projective space.
For more, see \cite{Bernardi:2012fk}.
The colorful terminology ``cactus rank'' was introduced in \cite{MR2842085},
following \cite{Buczynska:2014dq}.
Clearly $cr(f) \leq sr(f) \leq r(f)$.

A homogeneous ideal $I$ is said to be {\bf generated in degree (at most) $\delta$} if there
exist polynomials $g_1,\ldots,g_k$, all of degree at most $\delta$, such
that $I=\langle g_1,\ldots,g_k\rangle$.  The theorem of Ranestad and
Schreyer is as follows.

\begin{thm}[\cite{MR2842085}] \label{thm-ranestad-schreyer-bound}
Let $f\in S$ be a homogeneous polynomial, and suppose $f^\perp$ is
generated in degree $\delta$.  Then
\[
  cr(f) \geq \frac{\dim_\mathbb{C} A^f}{\delta}.
\]
\end{thm}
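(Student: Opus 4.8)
The plan is to prove the inequality in the sharp form $\dim_{\mathbb C} A^f \le \delta\, cr(f)$. Writing $s = cr(f)$, the characterization of cactus rank recalled above furnishes a zero-dimensional scheme $\Gamma \subseteq \operatorname{Proj}(T) = \mathbb{P}^{n-1}$ of length $s$ whose saturated homogeneous ideal $I_\Gamma \subseteq T$ satisfies $I_\Gamma \subseteq f^\perp$. Everything then becomes a comparison between the two graded quotients $R := T/I_\Gamma$ and $A^f = T/f^\perp$, linked by the surjection $R \twoheadrightarrow A^f$ induced by $I_\Gamma \subseteq f^\perp$.

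First I would set up the dual (inverse-system) picture, where the hypotheses become transparent. Under the apolar pairing, $A^f$ is identified with the cyclic $T$-module $M = T f \subseteq S$, so that $\dim_{\mathbb C}(A^f)_j = \dim_{\mathbb C} M_j$, while $R$ is dual to the inverse system $N = I_\Gamma^\perp = \{g \in S : D(g) = 0 \ \forall D \in I_\Gamma\}$, a $T$-submodule of $S$ with $\dim_{\mathbb C} N_j = h_{R}(j)$. The inclusion $I_\Gamma \subseteq f^\perp$ dualizes to $M \subseteq N$, so in every degree
\[
  \dim_{\mathbb C}(A^f)_j = \dim_{\mathbb C} M_j \le \dim_{\mathbb C} N_j = h_R(j) \le s,
\]
the final inequality because $\Gamma$ has length $s$. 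Summing over the $d+1$ degrees $0 \le j \le d = \deg f$ already gives the crude bound $\dim_{\mathbb C} A^f \le (d+1)s$; the entire content of the theorem is to replace the factor $d+1$ by $\delta$.

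Next I would extract the two structural inputs that should force this improvement. From the hypothesis that $f^\perp$ is generated in degrees $\le \delta$ one gets $(f^\perp)_j = T_1 \cdot (f^\perp)_{j-1}$ for all $j > \delta$, which dualizes to the derivative-closed description $M_j = \{g \in S_j : \partial_i g \in M_{j-1} \text{ for all } i\}$ for $j > \delta$; thus the top of $M$ is completely determined by its degree-$\delta$ part. From saturatedness of $I_\Gamma$ (equivalently $\operatorname{depth} R \ge 1$) a general linear form $\theta \in T_1$ is a nonzerodivisor on $R$, so multiplication by $\theta$, and hence by $\theta^\delta$, is injective on $R$, while the dual contraction by $\theta$ is surjective on $N$. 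The strategy is to use $\theta^\delta$, which shifts degree by $\delta$ and therefore acts within each residue class modulo $\delta$, to organize the sum $\sum_j \dim_{\mathbb C}(A^f)_j$ one residue class at a time, aiming for the windowed estimate $\sum_{j \equiv a \,(\mathrm{mod}\,\delta)} \dim_{\mathbb C}(A^f)_j \le s$ for each $a$, which sums to exactly $\dim_{\mathbb C} A^f \le \delta s$.

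The main obstacle is precisely this windowed estimate: the naive comparison only yields $\dim_{\mathbb C}(A^f)_j \le s$ degree by degree (equivalently, the maximum over a residue class is $\le s$), whereas what is needed is that the entire residue class sums to at most $s$. Bridging this gap is the crux, and is where the degree-$\delta$ generation must be fed into the nonzerodivisor structure on $R$: one must show that the Gorenstein pairing on $A^f$ together with the derivative-closed description above lets the graded pieces in a single residue class be realized disjointly inside an $s$-dimensional space associated to $\Gamma$, rather than merely mapping into one another. I expect this to reduce, after passing to the Artinian reduction $R/\theta R$ of length $s$ and tracking the filtration induced by powers of $\theta$, to a rank count for the catalecticant maps $T_j \to S_{d-j}$, $\,D \mapsto D(f)$, whose ranks are exactly the $\dim_{\mathbb C}(A^f)_j$ and which factor through the finite-length ring $R$; controlling how these ranks can accumulate along a residue class is the heart of the argument.
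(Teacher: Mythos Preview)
The paper does not prove this theorem at all: it is quoted verbatim from Ranestad and Schreyer \cite{MR2842085} and used as a black box. So there is no ``paper's own proof'' to compare against; the relevant benchmark is the short argument in the cited source.

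Regarding your proposal itself, it is a plan rather than a proof, and you say so yourself. The setup is correct: taking an apolar zero-dimensional scheme $\Gamma$ of length $s$, passing to $R=T/I_\Gamma$, choosing a linear nonzerodivisor $\theta$, and observing $h_{A^f}(j)\le h_R(j)\le s$ are all fine, and they do yield the crude bound $\dim A^f\le (d+1)s$. But the heart of the matter---replacing $d+1$ by $\delta$---is left open. Your proposed mechanism is the residue-class estimate
\[
  \sum_{j\equiv a\ (\mathrm{mod}\ \delta)} h_{A^f}(j)\ \le\ s \qquad (0\le a<\delta),
\]
and you explicitly call this ``the main obstacle'' and end with ``I expect this to reduce\dots''. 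That is not a proof. Nothing you have written forces the graded pieces of $A^f$ in a fixed residue class to embed \emph{disjointly} into a single $s$-dimensional space; the injectivity of $\theta^\delta$ on $R$ does not descend to $A^f$, and the derivative-closed description of $M$ you obtain does not by itself prevent overlap.

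For comparison, Ranestad and Schreyer do not argue via residue classes. Their proof is a short module-theoretic length count: with $\ell$ a nonzerodivisor on $R$, the Artinian reduction $\bar R = R/\ell R$ has length $s$, and one bounds $\dim A^f$ by analyzing $A^f$ as a module over $R$ (equivalently over $k[\ell]$), using that the relations defining $A^f$ over $R$ lie in degrees $\le\delta$. The factor $\delta$ enters through the generation degree of $f^\perp/I_\Gamma$ in $R$, not through any mod-$\delta$ periodicity of the Hilbert function. If you want to complete your write-up, I would recommend abandoning the residue-class heuristic and instead proving directly that $A^f$ admits a filtration of length $\delta$ with subquotients that are $\bar R$-modules (hence each of length $\le s$); this is what the degree bound on the generators of $f^\perp$ actually buys.
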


The 2006 Ph.D.\ dissertation of Max Wakefield \cite{Wakefield:2006ys}
further investigates the apolar algebras of hyperplane arrangements.
In particular, he gives an example of two arrangements with the same combinatorial
intersection lattice, only one of which has an apolar ideal that is a complete
intersection.

\subsection{Reflection groups and their invariant theory}

We now give a brief overview of the invariant theory of complex
reflection groups.
Two recent books on this subject are those of Lehrer and
Taylor~\cite{Lehrer-Taylor} and Kane~\cite{Kane}.  A classic
reference, addressing only the case of real reflection groups, is that
of Humphreys~\cite{humphries}.  The statements of this section are
originally due to Steinberg~\cite{Steinberg} and Chevalley~\cite{Chevalley}.

Let $V$ be a $\C$-vector space of dimension $n$.
A non-identity element $t\in \GL(V)$
is called a {\bf pseudo-reflection} if it fixes a hyperplane
$H_t\in V$ (so it has the eigenvalue 1 with geometric multiplicity
$n-1$) and has one eigenvalue which is not equal to $1$.
This exceptional eigenvalue must be a $k_t$-th root of unity
where $k_t$ is the order of $t$.
A {\bf complex reflection group} $W$ is a finite subgroup of $\GL (V)$ generated by
pseudo-reflections.  If $V$ can be given a basis such that the matrix representation
for every element of $W$ has only real entries, then $W$ is a {\bf real reflection
group}.  It is a necessary but not sufficient condition for $W$ to be a real reflection
group that every pseudo-reflection has order 2.
The action of $W$ is said to be \defining{essential} if the only fixed point of $W$ is the origin;
equivalently, if the intersection of the reflecting hyperplanes is the origin.
We do \emph{not} assume that $W$ acts essentially.

We now change viewpoints and
consider $W$ as an abstract group equipped with a distinguished
representation $V$; from this viewpoint $V$ is known as the {\bf
  reflection representation}.  Note that $V\cong V^*$ as representations
when $W$ is a real reflection group but not necessarily when $W$ is not.

We identify $\Sym(V)$ with the polynomial ring
$S=\C[x_1,\ldots,x_n]$ by choosing a basis $x_1,\dotsc,x_n$ of $V$.
(Note this is an unusual choice of convention; $S$ is usually identified with $\Sym(V^*)$,
but we find it more convenient for our purposes to identify $T$ with $\Sym(V^*)$
and hence $S$ with $\Sym(V)$.)
For each reflection $t \in W$, let $L_t \in V$ be an exceptional
eigenvector of $t$, an eigenvector with eigenvalue $\neq 1$.
Now let $f_W=\prod L_t$, where the product is over
all reflections in $W$.  (Note that this will usually include more
reflections than are necessary to generate $W$.)
Let $H_t \subset V^*$ be the reflecting hyperplane
of $t$ in the adjoint action of $W$ on $V^*$.
Note that $H_t \subset V^*$ is defined by the linear equation $L_t$.
Two reflections $s, t$ satisfy $H_s = H_t$ if and only if $L_s = L_t$ up to a scalar multiple,
which occurs if and only if $s$ and $t$ lie in a cyclic subgroup generated by a reflection.
Therefore, up to a scalar multiple, $f_W = \prod L_H^{k_H - 1}$,
where the product is over the hyperplanes $H$
fixed by reflections in $W$, $L_H$ is the complementary exceptional eigenvector of any
reflection $t \in W$ fixing $H$, and $k_H$ is the order of the cyclic subgroup
of reflections fixing $H$.
Thus $f_W$ is the defining equation of the reflection multi-arrangement of $W$ in $V^*$.  (Of course, $f_W$ is only defined up to multiplication by a nonzero constant, but this will be irrelevant for our purposes.)

We say that a polynomial $p\in\Sym(V)$ is {\bf skew invariant}
(for $W$) if $g\cdot p = (\det g)^{-1} p$ for every $g\in W$.
(We take the determinant of $g$ by considering it as an element of $\GL(V)$.)
The polynomial $f_W$ divides every skew invariant polynomial in $S=\Sym(V)$, and we call $f_W$
the {\bf fundamental skew invariant}~\cite[Prop.~20-1B]{Kane}~\cite[Lemma 9.10]{Lehrer-Taylor}.
Let $D = \deg f = \sum (k_H-1)$ where the sum is over all reflecting hyperplanes $H$;
this is the number of reflections in $W$.
As $f_W$ divides every skew invariant polynomial, the only skew invariant polynomial of degree less than $D$ is $0$, and every
skew invariant polynomial of degree $D$ must be a constant multiple of $f_W$.
(Note that most sources consider instead elements of $\Sym(V^*)$
on which $W$ acts by $\det$, so they regard $f_W$ as the element
of $\Sym(V^*)$ defining the reflection multi-arrangment in $V$.)

A classical theorem, proved on a case by case basis by Shephard and Todd~\cite{Shephard-Todd}
and uniformly by Chevalley~\cite{Chevalley},
states that the subring $S^W\subset S$ of $W$-invariant polynomials is
itself a polynomial ring, generated by a set of homogeneous and
algebraically independent invariants. 
These invariants are not uniquely determined, but their degrees are.
These degrees will be denoted $d_1,\ldots,d_n$, with $d_1\leq\cdots\leq d_n$;
they are known as the {\bf degrees} of the complex reflection group $W$.
The product of the degrees turns out to be the order of the group $W$;
in notation, $\prod_{i=1}^n d_i = |W|$.
The degrees satisfy $\sum_{i=1}^n (d_i-1)=D$.
Note that $1$ is a degree if and only if $W$ does not act essentially.

The fundamental skew invariant has an alternative description
as the Jacobian of the invariants;
see \cite[Prop.~21-1A]{Kane},
\cite[Thm.~9.8]{Lehrer-Taylor}.

A \textbf{coinvariant} of $W$ is a homogeneous element of $T$
which is invariant under the action of $W$ (induced by the adjoint action on $V^*$).
Let $J_W$ be the ideal generated by all positive-degree
coinvariants in $T$.
Then $J_W$ is generated as an ideal by the generators of the coinvariant ring,
and these have the same degrees
$d_1,\dotsc,d_n$ as the basic invariants in $S$.
A theorem of Steinberg~\cite{Steinberg} states that $J_W = f_W^\perp$
(see also \cite[Chapter 26]{Kane} or \cite[Lemma 9.36]{Lehrer-Taylor})
so the apolar ring
$A^{f_W}$ for the fundamental skew invariant $f_W$ is isomorphic to
the {\bf covariant ring} $T/J_W$ both as a graded ring and as a
representation of $W$.
This holds without any assumption that $W$ acts essentially.
Since $J_W$ is a complete intersection ideal generated in degrees $d_1,\dotsc,d_n$,
$\dim_{\mathbb{C}} T/J_W = \prod_{i=1}^n d_i = |W|$.
Thus we have the following corollary to Theorem \ref{thm-ranestad-schreyer-bound}.

\begin{cor}\label{cor: lower bound}
Let $W$ be a complex reflection group,
$f_W$ its fundamental skew invariant,
and $d_1\leq\cdots\leq d_n$ its degrees.
Then
\[
 r(f_W) \geq sr(f_W) = cr(f_W) = \frac{d_1 \dotsm d_n}{d_n} = d_1 \dotsm d_{n-1} = \frac{|W|}{d_n} .
\]
\end{cor}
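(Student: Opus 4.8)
The plan is to deduce the corollary from Theorem~\ref{thm-ranestad-schreyer-bound} (the Ranestad--Schreyer bound) together with Steinberg's identity $f_W^\perp = J_W$ and the complete-intersection structure of $J_W$, and then to match the resulting lower bound by an explicit zero-dimensional subscheme. First I would record the facts already assembled above: $f_W^\perp = J_W$, the ideal $J_W$ is minimally generated by a regular sequence $\theta_1,\dots,\theta_n$ of homogeneous coinvariants with $\deg\theta_i = d_i$, and hence $A^{f_W} = T/J_W$ is Artinian with $\dim_\C A^{f_W} = d_1\dotsm d_n = |W|$. Since every $\theta_i$ has degree at most $d_n$, the ideal $f_W^\perp$ is generated in degree $d_n$, so Theorem~\ref{thm-ranestad-schreyer-bound} applied with $\delta = d_n$ yields
\[
  cr(f_W) \ge \frac{\dim_\C A^{f_W}}{d_n} = \frac{|W|}{d_n} = d_1\dotsm d_{n-1}.
\]
(This is the sharpest value the theorem offers here, since $J_W$ genuinely requires a minimal generator of degree $d_n$.) Together with the general inequalities $cr(f_W)\le sr(f_W)\le r(f_W)$, this gives the stated lower bound for all three ranks.

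For the reverse inequalities I would use the subideal $I = (\theta_1,\dots,\theta_{n-1}) \subseteq J_W = f_W^\perp$ cut out by the coinvariants of the $n-1$ smallest degrees. A subsequence of a regular sequence of homogeneous elements in the polynomial ring $T$ is again a regular sequence, so $I$ is a complete intersection of codimension $n-1$ and $T/I$ is Cohen--Macaulay of Krull dimension $1$; in particular the irrelevant maximal ideal is not an associated prime of $T/I$, so $I$ is saturated and is the homogeneous ideal of a zero-dimensional subscheme $X\subset\PP^{n-1}$ of length $\deg X = d_1\dotsm d_{n-1}$ by B\'ezout. Since $I\subseteq f_W^\perp$, this already gives $cr(f_W)\le d_1\dotsm d_{n-1}$.

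To upgrade this to the bound $sr(f_W)\le d_1\dotsm d_{n-1}$, I would check that $X$ is smoothable by deforming $\theta_1,\dots,\theta_{n-1}$ to general forms $g_1,\dots,g_{n-1}$ with $\deg g_i = d_i$. The locus in $\prod_{i=1}^{n-1}\PP(T_{d_i})$ of tuples whose common zero locus in $\PP^{n-1}$ has dimension $0$ is a nonempty open subset (it contains the point corresponding to $X$) of an irreducible variety, over which the family of complete intersections is flat with constant length $d_1\dotsm d_{n-1}$; and a general member of this family is reduced by Bertini. Hence $X$ is a flat limit of reduced length-$(d_1\dotsm d_{n-1})$ subschemes of $\PP^{n-1}$, i.e.\ smoothable. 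Combining the two directions,
\[
  d_1\dotsm d_{n-1} \le cr(f_W) \le sr(f_W) \le d_1\dotsm d_{n-1},
\]
so $cr(f_W) = sr(f_W) = d_1\dotsm d_{n-1} = |W|/d_n$, and $r(f_W)\ge sr(f_W)$ gives the remaining inequality; the chain of equalities displayed in the corollary follows from $\prod_i d_i = |W|$.

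The main point needing care is the last one, the smoothability of the complete intersection $X$, which is what promotes the conclusion from a lower bound on $cr(f_W)$ to the equalities $cr(f_W) = sr(f_W)$; everything else is a direct assembly of Steinberg's theorem, the complete-intersection structure of $J_W$, and the Ranestad--Schreyer bound, and in particular the argument uses no essentiality hypothesis on $W$ (if some $d_i=1$, the corresponding $\theta_i$ is a linear form, which changes nothing).
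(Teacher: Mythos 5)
Your proof is correct and follows essentially the same route as the paper: apply the Ranestad--Schreyer bound with $\delta = d_n$ using Steinberg's theorem $f_W^\perp = J_W$, and then exhibit the one-dimensional complete intersection $(\theta_1,\dots,\theta_{n-1})\subseteq f_W^\perp$ as the saturated ideal of a zero-dimensional scheme of length $d_1\dotsm d_{n-1}$. The only difference is that the paper simply invokes the known fact that every complete intersection is smoothable, whereas you supply the standard deformation-plus-Bertini argument for it; that extra detail is sound and does not change the structure of the proof.
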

\begin{proof}
The inequalities $r(f_W) \geq s_r(f_W) \geq cr(f_W) \geq |W|/d_n$ are immediate from the definitions
and Theorem \ref{thm-ranestad-schreyer-bound}.
Let $I_1,\dotsc,I_n$ be the invariant generators of $f_W^\perp = J_W$.
This is a zero-dimensional complete intersection.
Therefore the ideal $\langle I_1,\dotsc,I_{n-1} \rangle \subset f_W^\perp$ is a one-dimensional complete intersection,
so it defines a zero-dimensional scheme of length $d_1 \dotsm d_{n-1} = |W|/d_n$ in projective space.
This shows $cr(f_W) \leq |W|/d_n$.
Since every complete intersection is smoothable, $sr(f_W) \leq |W|/d_n$.
\end{proof}

%

\subsection{Regular numbers}\label{section: regular numbers}

A vector $v\in V$, where $V$ is the reflection representation,
is called a {\bf regular vector}
if $v$ does not lie on any reflecting hyperplane.
A group element
$g\in W$ is called a {\bf regular element} if $g$ has a regular
eigenvector.  A positive integer $d$ is called a {\bf regular number}
if there exists a regular element having a regular eigenvector of
eigenvalue $e^{2\pi i/d}$.
These definitions are originally due to Springer
\cite{Springer-regular-elements}.

The remainder of this section is not necessary for our theorem or
its proof but classifies the groups for which our technical hypothesis requiring that
the highest degree be a regular number holds.  This classification was also given
(with fewer details) by Spaltenstein~\cite[p. 305--306]{Spal95}.

A reflection group $W$ is {\bf irreducible} if the reflection representation $V$ is irreducible,
or equivalently if $W$ is not the direct product of two proper reflection subgroups.

For an \emph{irreducible} real reflection group, it was classically known (even before Springer defined the concept) that $d_n$ is a regular number.  Indeed, pick any chamber (meaning connected component of the complement of the reflection arrangement) $\mathcal{C}$ of $V$ and let $t_1,\ldots,t_n$ be $n$ reflections such that the linear forms
$L_{t_1},\ldots,L_{t_n}$ are the hyperplanes bounding $\mathcal{C}$.  Then the element $t_1t_2\cdots
t_n\in W$ is known as a {\bf Coxeter element}
(and all Coxeter elements are conjugate).
The Coxeter element always has order $d_n$, and there is a real
plane in $V$, usually known as the {\bf Coxeter plane}, on which it
acts by rotation by $2\pi/d_n$; hence it has an eigenvector of
eigenvalue $e^{2\pi i/d_n}$ in the complexification of the Coxeter
plane.  This eigenvector does not lie on a reflecting hyperplane,
because the intersection of any reflecting hyperplane with the real
Coxeter plane is a real line, so the reflecting hyperplane cannot contain the complex line
containing the nonreal eigenvector.  See~\cite[Chap. 3]{humphries} for details.


More generally,
for any irreducible complex reflection group,
Lehrer and Springer~\cite{Lehrer-Springer} gave a criterion in terms of degrees
and codegrees for a number to be regular.
(This result was later given a case-free proof by Lehrer and Michel~\cite{Lehrer-Michel};
see also \cite[Chapter 11.4.2]{Lehrer-Taylor}.)
Consider the action of $W$ on the covariant ring $\Sym(V^*)/J_W$.  Let
$A_i$ be the degree $i$ graded piece of $\Sym(V^*)/J_W$.  Given any
representation $U$ of $W$, we define the {\bf fake degree} of $U$
to be the generating function
\[
  g_U(t)=\sum_{i\geq 0} (\dim_\C \Hom_{\C W} (A_i,U))t^i.
\]
We can write $g_U(t)$ in the form
\[
  g_U(t)=t^{e_1(U)}+\cdots+t^{e_r(U)},
\]
with $e_1(U)\leq e_2(U)\leq\cdots\leq e_r(U)$.
The numbers $e_1(U),\ldots,e_r(U)$ are known as the
{\bf $U$-exponents}.  In the case where $U=V$ is the reflection
representation, these are known simply as the {\bf exponents}, and it is classically known
that $e_i=d_i-1$ and hence that
$\sum_{i=1}^n e_n$ is the degree of $f_W$.
In the case $U=V^*$, the $V^*$-exponents are known as the {\bf coexponents},
and the numbers $d_i^*=e_i(V^*)-1$ are the {\bf codegrees}.

Now the criterion of Lehrer and Springer is as follows.

\begin{thm}[\cite{Lehrer-Springer}]\label{thm: regular number criterion}
A number $d$ is regular if and only if the number of degrees that $d$ divides is
the same as the number of codegrees $d$ divides, or, in notation,
\[
  \# \{ i : d|d_i \} = \# \{ i : d|d^*_i \} .
\]
\end{thm}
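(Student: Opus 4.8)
The plan is to follow the approach of Springer and of Lehrer--Springer, in the case-free form later supplied by Lehrer--Michel. Fix a primitive $d$-th root of unity $\zeta = e^{2\pi i/d}$; for $w \in W$ write $V(w,\zeta)$ for the $\zeta$-eigenspace of $w$ on the reflection representation $V$, and set $a(d) = \#\{i : d \mid d_i\}$ and $a^*(d) = \#\{i : d \mid d_i^*\}$. By definition $d$ is regular precisely when some $w \in W$ has a regular vector (one on none of the reflecting hyperplanes in $V$) inside $V(w,\zeta)$. The first point to establish is $\max_{w\in W}\dim V(w,\zeta) = a(d)$. The inequality $\dim V(w,\zeta) \le a(d)$ is elementary: if $E = V(w,\zeta)$ and $f$ is a $W$-invariant polynomial function on $V$, homogeneous of degree $d_i$, then from $w^{-1}|_E = \zeta^{-1}$ and $W$-invariance we get $f(v) = f(\zeta^{-1}v) = \zeta^{-d_i}f(v)$ for $v \in E$, so $f|_E = 0$ unless $d \mid d_i$; as the invariants form a polynomial ring on generators of degrees $d_1 \le \cdots \le d_n$, the finite quotient morphism $V \to V/W \cong \A^n$ sends $E$ into the coordinate subspace $\A^{a(d)}$ cut out by the generators of degree not divisible by $d$, and the restriction $E \to \A^{a(d)}$ has finite fibers, so $\dim E \le a(d)$. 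For the reverse inequality I would compare pole orders at $t = \zeta^{-1}$ in the Molien identity $\sum_{w\in W}\det_V(1-tw)^{-1} = |W|\prod_i(1-t^{d_i})^{-1}$: the right side has a pole there of order exactly $a(d)$, while the left side is a sum whose $w$-term has a pole of order $\dim V(w,\zeta)$, hence a pole of order at most $\max_w\dim V(w,\zeta)$; so $a(d) \le \max_w\dim V(w,\zeta)$, and combined with the elementary bound this gives $\max_w\dim V(w,\zeta) = a(d)$. Thus $d$ is regular iff some $w$ attaining this maximal eigenspace dimension has a regular vector in $V(w,\zeta)$; call such a $w$ a \emph{$\zeta$-regular element}.

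For the implication ``$d$ regular $\Rightarrow a(d) = a^*(d)$'' I would invoke Springer's structure theorem for a $\zeta$-regular element $w$: the centralizer $C_W(w)$ acts faithfully on $E = V(w,\zeta)$ as a finite complex reflection group of rank $\dim E = a(d)$, whose degrees are exactly the $d_i$ divisible by $d$ and whose codegrees are exactly the $d_i^*$ divisible by $d$. A complex reflection group of rank $r$ has precisely $r$ codegrees (its $r$-dimensional reflection representation contributes $r$ coexponents, hence $r$ codegrees), and here $r = \dim E = a(d)$; therefore $a^*(d) = \#\{\text{codegrees of } C_W(w)|_E\} = r = a(d)$. (The parallel count with degrees is the consistency statement $\#\{i : d \mid d_i\} = \dim E$.) Isolating the precise form of this structure theorem is, I expect, the most delicate step, especially the assertion that the inherited \emph{codegrees} are exactly the $W$-codegrees divisible by $d$: this rests on Steinberg's identity $f_W^\perp = J_W$, the Jacobian description of $f_W$, and the fake-degree definition of the $d_i^*$, and is where the real work sits if one does not simply cite Springer.

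The converse, ``$a(d) = a^*(d) \Rightarrow d$ regular,'' is the genuine obstacle, since it demands actually producing a regular eigenvector. Following Lehrer--Michel, I would refine the pole-order comparison by inserting into the Molien-type sum a weight detecting regularity --- built from the defining polynomial $f_W$ of the arrangement and its restrictions to eigenspaces --- and evaluate again near $t = \zeta^{-1}$. The invariant side again records the degrees $d_i$ divisible by $d$, while the regularity weight forces in the restricted arrangement, whose exponents introduce precisely the codegrees $d_i^*$ divisible by $d$; matching the two evaluations shows the weighted count of $\zeta$-regular elements is nonzero exactly when $a(d) = a^*(d)$, so under that hypothesis a $\zeta$-regular element exists and $d$ is regular. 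Carrying out this weighted identity uniformly --- rather than checking the Shephard--Todd classification, as the original argument of Lehrer and Springer does --- is the delicate case-free point, which we are content to take from Lehrer and Michel; for an irreducible real reflection group no such difficulty arises when $d = d_n$, since the Coxeter element already displays $d_n$ as a regular number, as recalled above.
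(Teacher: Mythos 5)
This statement is not proved in the paper: it is imported as a citation to Lehrer--Springer, with a pointer to the later case-free proof of Lehrer--Michel. So there is no in-paper argument to compare yours against, and I can only assess the sketch on its own terms. The parts you actually carry out are correct and standard: the bound $\dim V(w,\zeta)\leq a(d)$ by restricting the basic invariants to the eigenspace and using finiteness of the quotient map $V\to V/W$; the reverse bound $a(d)\leq\max_w\dim V(w,\zeta)$ by comparing pole orders at $t=\zeta^{-1}$ in Molien's formula; and the observation that a rank-$r$ reflection group has exactly $r$ codegrees, which is what converts the structure theorem into the equality $a^*(d)=a(d)$.

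However, as a proof the proposal has two genuine gaps, both located exactly where you flag "the real work." First, the forward implication is circular as written: the assertion that the reflection subquotient acting on $V(w,\zeta)$ has as its codegrees precisely the $W$-codegrees divisible by $d$ is the main theorem of the very paper (Lehrer--Springer) whose criterion you are deriving, so citing it proves nothing new; only the degree half of that structure theorem is Springer's, and the codegree half is the substantive content you would need to establish. Second, the converse direction is a gesture rather than an argument: "inserting a weight detecting regularity into the Molien-type sum" does not identify the actual mechanism, which in Lehrer--Michel runs through the Orlik--Solomon two-variable refinement of Molien's formula (where the coexponents $d_i^*+1$ appear in the numerator) together with the identification of the group induced on a maximal eigenspace $E$ by its setwise stabilizer as a reflection group whose order can be computed two ways; nothing in your description would let a reader reconstruct why $a(d)=a^*(d)$ forces a regular vector to exist. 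A smaller point: your reduction "$d$ is regular iff some $w$ attaining the maximal eigenspace dimension has a regular eigenvector" silently uses Springer's theorem that a regular eigenvector already forces $\dim V(w,\zeta)=a(d)$ (via the equidimensionality of the variety cut out by the basic invariants of degree not divisible by $d$), which also needs proof. In short: your outline is an accurate map of the literature, consistent with the paper's decision to cite the result rather than prove it, but it is not a self-contained proof; for the paper's actual needs, note that only the case of $d=d_n$ for irreducible real groups is required, where the Coxeter element argument recalled in Section~\ref{section: regular numbers} suffices without any of this machinery.
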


Note that $d^*_1=0$, since $A_1$, the degree $1$ piece of
$\Sym(V^*)/J_W$, is $V^*$.
Hence, every regular number divides at least one codegree,
so every regular number divides at least one degree,
and in particular no regular number exceeds the largest degree $d_n$.

Using this criterion, one can easily calculate using the Shephard--Todd classification and
a table of degrees and codegrees (for example in \cite[Appendix D.2]{Lehrer-Taylor}) that the only irreducible
reflection groups whose largest degrees are not regular numbers are the sporadic group $G_{15}$ and the imprimitive groups
$G(de, e, n)$ where $d,e,n\geq2$ and at least one of $e>2$ or $n>2$.

For a reducible reflection group $W = W_1 \times W_2$,
it follows easily from the definitions
that a number is regular for $W$ if and only if it is regular for each of $W_1$ and $W_2$,
while the degrees of $W$ are simply the union (as a multiset) of the degrees of $W_1$ and of $W_2$.
These facts imply that a reducible reflection group has its highest degree regular if and only if
the highest degrees of its irreducible components are equal and this number is regular for each of the components.
Note also that $f_{W_1 \times W_2} = f_{W_1} f_{W_2}$.

\section{Proof of Main Theorem}\label{section: main proof}

Now we give upper bounds for $r(f_W)$ by giving expressions for $f_W$
as sums of powers of linear forms.

For a polynomial $p \in S=\Sym(V)$ the \textbf{skew-symmetrization} of $p$ is
\begin{equation}\label{eqn: alternating sum}
  \alt(p) := \frac{1}{|W|}\sum_{w\in W} (\det w) (w \cdot p).
\end{equation}
Note that for $w \in W$, $w \cdot \alt(p) = (\det w)^{-1} \alt(p)$, so $\alt(p)$ is skew invariant for any $p$.

Recall that if $p\in S$ is a skew invariant polynomial, then $f_W$ divides $p$.
In particular, if $D = \sum_{i=1}^n (d_i-1)$ is the degree of $f_W$,
and $p$ has degree $D$, then $\alt(p)$ must be a (possibly zero) scalar multiple of $f_W$.
Therefore, given any linear form $L\in V$, we see that
$\alt(L^D)$ is a (possibly zero) scalar multiple of $f_W$.

We now show the following lemma.

\begin{lem}
With $D$ as above,
the polynomial $\alt(L^D)$ is zero if and only if $L$ lies on
a reflecting hyperplane in $V$.
\end{lem}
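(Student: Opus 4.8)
The plan is to analyze when the skew-symmetrization $\alt(L^D)$ vanishes by using the explicit formula \eqref{eqn: alternating sum}. First I would dispose of the easy direction. Suppose $L \in V$ lies on some reflecting hyperplane, say $L$ is fixed by a reflection $t$ (that is, $t \cdot L = L$). I want to pair off the terms of the sum $\sum_{w \in W} (\det w)(w \cdot L^D)$ using the coset decomposition with respect to the cyclic subgroup generated by $t$, or more simply the coset decomposition $W = \bigsqcup (wC_t)$ where $C_t = \langle t \rangle$. For each element $t'$ of $C_t$, since $t'$ fixes $L$ it fixes $L^D$, so $w t' \cdot L^D = w \cdot L^D$; hence the contribution of the coset $wC_t$ is $\big(\sum_{t' \in C_t} \det(wt')\big)(w \cdot L^D) = (\det w)\big(\sum_{t' \in C_t} \det t'\big)(w \cdot L^D)$. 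The inner sum $\sum_{t' \in C_t} \det t'$ is a sum of the powers of the nontrivial root of unity $\det t$ ranging over a full cyclic group, hence equals $0$ (as $\det t \neq 1$). So every coset contributes $0$ and $\alt(L^D) = 0$.

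For the converse — the direction I expect to be the main obstacle — I would suppose $L$ is a regular vector (lies on no reflecting hyperplane) and show $\alt(L^D) \neq 0$. The natural approach is to use the fact, recorded in the excerpt, that $A^{f_W} \cong T/J_W$ and that $\alt(L^D)$ is either $0$ or a nonzero multiple of $f_W$; so it suffices to produce a differential operator witnessing that $\alt(L^D)$ is nonzero, or to evaluate a suitable pairing. Concretely, I would consider the apolar pairing and compute $\langle \alt(L^D), \Lambda \rangle$ for a well-chosen $\Lambda \in T_D$. Since $\det$-skew-symmetrization is (up to the factor $1/|W|$) a $W$-equivariant projection, and the apolar pairing intertwines the $W$-action on $S$ with the adjoint action on $T$, one gets
\[
  \langle \alt(L^D), \Lambda \rangle = \langle L^D, \alt^*(\Lambda) \rangle,
\]
where $\alt^*$ is the analogous skew-symmetrization on $T$. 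Taking $\Lambda$ to be (a power of a linear form giving) $f_W$ itself viewed inside $T = \Sym(V^*)$ — more precisely, choosing $\Lambda$ so that $\alt^*(\Lambda)$ is the copy of the fundamental skew invariant in $T$ — reduces the question to showing $\langle L^D, f_W^\vee \rangle \neq 0$ where $f_W^\vee \in T_D$ is the fundamental skew invariant of the dual action. But $\langle L^D, D_1 D_2 \cdots \rangle$ is essentially the evaluation of the polynomial $f_W^\vee$ (thought of as a product of linear forms on $V$) at the point $L$, up to a nonzero combinatorial constant; and since $L$ is regular, it is not a zero of any of those linear factors, so this evaluation is nonzero.

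The technical care needed is in checking the interaction of the two skew-symmetrizations with the apolar pairing and the $V \not\cong V^*$ subtlety (the paper's nonstandard convention that $T = \Sym(V^*)$ and $S = \Sym(V)$ is precisely set up to make this work), and in confirming the nonzero constant: writing $f_W^\vee = \prod_H L_H^{k_H-1}$ in $T$, each $L_H$ is a linear form on $V$ vanishing exactly on the reflecting hyperplane $H \subset V$, and a regular $L$ by definition avoids all of these. I would also need the standard fact that the apolar pairing of $L^D$ with a degree-$D$ product of linear functionals $\mu_1 \cdots \mu_D$ equals $D!\,\mu_1(L)\cdots\mu_D(L)$ (up to the normalization built into the $\alpha_i! \binom{\beta_i}{\alpha_i}$ factors), which is a routine multilinearity computation. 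Assembling these, $\alt(L^D)$ pairs nontrivially against the dual fundamental skew invariant, hence is a \emph{nonzero} multiple of $f_W$, completing the converse.
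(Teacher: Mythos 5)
Your proposal is correct, and it reaches the same underlying fact as the paper --- that the vanishing of $\alt(L^D)$ is governed by the evaluation $f_W^*(L)$ of the dual fundamental skew invariant $f_W^*\in\Sym^D(V^*)$ --- but by a different route. The paper argues in one stroke: it regards the functional $p\mapsto\alt(p)$ on $\Sym^D(V)$ as an element of $\Sym^D(V^*)$ on which $W$ acts by $\det$, invokes divisibility by $f_W^*$ together with the degree count $\deg f_W^*=D$ to conclude that this tensor \emph{is} a nonzero multiple of $f_W^*$, and reads off both implications simultaneously. You instead split the statement: for the forward direction you cancel terms over cosets of $\langle t\rangle$ using $\sum_{j=0}^{k_t-1}(\det t)^j=0$ (a clean elementary argument that does not appear in the paper), and for the converse you compute $\langle \alt(L^D),\,f_W^*\rangle=\langle L^D,\,f_W^*\rangle=D!\,f_W^*(L)\neq 0$ for regular $L$, using $W$-invariance of the apolar pairing, $\alt^*(f_W^*)=f_W^*$, and the evaluation formula for $\langle L^D,\Lambda\rangle$. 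Both checks go through (the equivariance and normalization issues you flag are routine with the paper's conventions $S=\Sym(V)$, $T=\Sym(V^*)$). What the paper's version buys is uniformity and brevity; what yours buys is that the hard direction becomes a concrete pairing computation rather than an isotypic-component identification. In fact your pairing identity alone already yields \emph{both} directions: since $\alt(L^D)=c(L)f_W$ for a scalar $c(L)$, the identity reads $c(L)\langle f_W,f_W^*\rangle=D!\,f_W^*(L)$, and plugging in any regular $L$ shows $\langle f_W,f_W^*\rangle\neq 0$, whence $c(L)=0$ exactly when $f_W^*(L)=0$, i.e.\ exactly when $L$ lies on a reflecting hyperplane; so your separate coset-cancellation argument, while pleasant, is not logically needed.
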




\begin{proof}
Let $\C_{\det^{-1}}$ be the rank one representation of $W$
on which $w \in W$ acts as $(\det w)^{-1}$, where the determinant is given
by the determinant of $w$ in its action on the reflection representation $V$.
Then the skew-symmetrization operator $\alt$ is a $\C W$-linear map
$\alt : \Sym^D(V) \rightarrow \C_{\det^{-1}}$,
where $\C_{\det^{-1}}$ is identified with the span of $f_W$.
We can tensor this map by the determinant representation $\mathbb{C}_{\det}$
to get a map
$\alt\otimes \mathbb{C}_{\det} : \Sym^D(V)\otimes \C_{\det} \rightarrow \mathbb{C}$,
which we can identify with an invariant element of
$\Sym^D(V^*)\otimes\mathbb{C}_{\det^{-1}}$,
or equivalently an element of $\Sym^D(V^*)$ on which $W$ acts by $\det$.

Just as $f_W$ divides
every element of $\Sym(V)$ on which $W$ acts by $\det^{-1}$,
we have a corresponding element $f^*_W\in \Sym(V^*)$ (defined only up to a nonzero scalar multiple),
vanishing precisely on the reflecting hyperplanes $H^*_t\subset V$,
each to order $k_t-1$,
that divides every element of $\Sym(V^*)$ on which $W$ acts by $\det$.

The operator $\alt$ is clearly nonzero, since $\alt(p)=p$ for the
polynomials $p$ on which $w \cdot p = (\det w)^{-1} p$ for all $w\in W$,
in particular for $p = f_W$.
Hence, regarded as a tensor, $\alt$ is a nonzero multiple of $f^*_W$,
so $\alt(L^D)$ is zero if and only if the tensor $f^*_W$ vanishes on $L^D$,
if and only if the polynomial $f^*_W$ vanishes on $L$,
if and only if $L$ lies on a reflecting hyperplane.
\end{proof}

Now suppose $L$ is an eigenvector of $w\in W$ with eigenvalue
$e^\frac{2\pi i}{d}$.
Let $C_w\subset W$ be the subgroup generated by $w$,
and let $W/C_w$ denote the collection of its left cosets.
Note $|C_w|$ is some multiple of $d$ and hence $|W/C_w|\leq |W|/d$. 
Picking any representative $\sigma$ for each coset in $W/C_w$, we have
\begin{equation}\label{eq: general explicit expression}
\begin{split}
  \alt(L^D) &= \frac{1}{|W|} \sum_{\sigma \in W/C_w} (\det \sigma)
    \left( \sigma \cdot \sum_{v\in C_w} (\det v) (v\cdot L^D) \right) \\
  &= \frac{1}{|W|} \sum_{\sigma \in W/C_w} (\det \sigma) 
    \left( \sigma \cdot \sum_{j=0}^{|C_w|-1} (\det w)^{j} e^{2\pi i Dj/d} L^D \right) \\
  &= \frac{1}{|W|} \left( \sum_{j=0}^{|C_w|-1} (\det w)^{j} e^{2\pi i Dj/d} \right)
    \sum_{\sigma \in W/C_w} (\det \sigma) (\sigma \cdot L^D) ,
\end{split}
\end{equation}
which is a sum of at most $|W/C_w|\leq |W|/d$ powers of linear
forms.
Hence we have shown the following proposition.

\begin{prop}\label{prop: upper bound}
Let $W$ be a complex reflection group, $f_W$ its fundamental skew
invariant, and $d$ a regular number for $W$.  Then
\[
  r(f_W) \leq \frac{|W|}{d} .
\]
\end{prop}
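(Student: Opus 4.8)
The plan is to combine the explicit expression \eqref{eq: general explicit expression} with the preceding lemma, after extracting a suitable linear form from the hypothesis that $d$ is regular. First I would unwind the definition of a regular number: since $d$ is regular for $W$, there is a regular element $w \in W$ with a regular eigenvector $L \in V$ of eigenvalue $e^{2\pi i/d}$. Because $e^{2\pi i/d}$ is a primitive $d$-th root of unity, the order of $w$, which equals $|C_w|$, is a multiple of $d$; hence $|W/C_w| = |W|/|C_w| \le |W|/d$.

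Next, because $L$ is a regular vector, the lemma gives $\alt(L^D) \ne 0$, and since $L^D$ has degree $D = \deg f_W$ and $\alt(L^D)$ is skew invariant, it must be a \emph{nonzero} scalar multiple of $f_W$. On the other hand, \eqref{eq: general explicit expression} writes $\alt(L^D) = \kappa \sum_{\sigma \in W/C_w} (\det\sigma)\,(\sigma\cdot L)^D$ with $\kappa = \tfrac{1}{|W|}\sum_{j=0}^{|C_w|-1} (\det w)^j e^{2\pi i Dj/d}$. One need not evaluate $\kappa$ directly: the two facts together force $\kappa \ne 0$, since otherwise $\alt(L^D)$ would vanish.

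Finally I would absorb scalars into $D$-th powers. Writing $f_W = c \sum_{\sigma \in W/C_w} (\det\sigma)\,(\sigma\cdot L)^D$ for the appropriate nonzero constant $c$, and choosing for each coset $\sigma$ a $D$-th root $\lambda_\sigma$ of $c\det\sigma$ (possible since $c\det\sigma \ne 0$), we obtain $f_W = \sum_{\sigma \in W/C_w} (\lambda_\sigma\, \sigma\cdot L)^D$, a sum of at most $|W/C_w| \le |W|/d$ powers of linear forms, so $r(f_W) \le |W|/d$. The substantive content has already been carried out in the lemma and in the derivation of \eqref{eq: general explicit expression}; the only points needing care are the divisibility $d \mid |C_w|$ and the observation that $\kappa$ cannot vanish once $L$ is known to be regular. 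I do not expect any genuine obstacle in assembling these pieces.
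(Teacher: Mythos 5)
Your proposal is correct and follows essentially the same route as the paper: choose a regular eigenvector $L$ of eigenvalue $e^{2\pi i/d}$, invoke the lemma to see $\alt(L^D)$ is a nonzero multiple of $f_W$, and collapse the sum over $W$ to a sum over the $|W|/|C_w| \le |W|/d$ cosets of $C_w$, absorbing scalars into the linear forms. The observation that the scalar $\kappa$ cannot vanish is the same point the paper makes just after the proposition when simplifying to \eqref{eq: simplified general explicit expression}.
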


Combining Proposition \ref{prop: upper bound} and Corollary \ref{cor: lower bound},
we have shown our main theorem:

\begin{thm}
Let $W$ be a finite complex reflection group acting on $\C^n$,
$d_1\leq\cdots\leq d_n$ the degrees of $W$, and suppose that $d_n$ is a regular number.
Then $cr(f_W) = sr(f_W) = r(f_W) = |W|/d_n$.
\end{thm}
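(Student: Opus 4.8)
The plan is simply to combine the two bounds already in hand. Since $d_n$ is assumed to be a regular number, I would apply Proposition~\ref{prop: upper bound} with $d = d_n$, yielding the upper bound $r(f_W) \le |W|/d_n$. In the other direction, Corollary~\ref{cor: lower bound} --- which holds for \emph{every} finite complex reflection group --- gives $cr(f_W) = sr(f_W) = |W|/d_n$ together with $r(f_W) \ge |W|/d_n$. Because $cr(f_W) \le sr(f_W) \le r(f_W)$ always holds, the upper bound pins all four quantities to the common value $|W|/d_n$, which is the asserted equality.

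I would also note why $d_n$ is the right choice of regular number: by the remarks following Theorem~\ref{thm: regular number criterion}, every regular number divides some degree, hence no regular number exceeds $d_n$; thus among all regular numbers $d$ the estimate $|W|/d$ of Proposition~\ref{prop: upper bound} is smallest --- and exactly meets the Ranestad--Schreyer lower bound --- precisely when $d = d_n$. For the parenthetical claim about irreducible real reflection groups, I would cite the discussion in Section~\ref{section: regular numbers}: a Coxeter element has order $d_n$ and rotates the Coxeter plane by $2\pi/d_n$, and an eigenvector of eigenvalue $e^{2\pi i/d_n}$ in the complexified Coxeter plane meets no reflecting hyperplane, so $d_n$ is regular for such $W$.

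Honestly there is no real obstacle at this last step; all the substance lies in the earlier results, especially the lemma and the coset computation behind Proposition~\ref{prop: upper bound}. The one thing I would be careful to check when writing it out is that the explicit identity~\eqref{eq: general explicit expression} delivers a \emph{nonzero} scalar multiple of $f_W$, so that it is a bona fide Waring decomposition of length at most $|W|/d_n$ rather than the vacuous identity $0 = 0$: the scalar factor $\sum_{j=0}^{|C_w|-1} (\det w)^{j} e^{2\pi i D j/d}$ could in principle vanish. It does not, because the chosen eigenvector $L$ is a regular vector, so the preceding lemma forces $\alt(L^D) \neq 0$, and hence that scalar is nonzero. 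Once this is noted, the theorem is immediate.
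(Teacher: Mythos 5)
Your proposal is correct and follows exactly the paper's route: the theorem is obtained by combining Proposition~\ref{prop: upper bound} (applied with $d=d_n$) with Corollary~\ref{cor: lower bound}, and your added check that the scalar factor in~\eqref{eq: general explicit expression} is nonzero because $L$ is regular is the same point the paper makes via its lemma and the subsequent discussion of the factor $\zeta$. Nothing is missing.
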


See \cite{Bernardi:2012fk} for additional related quantities that are between the Waring rank and cactus rank,
and therefore equal in the case of this theorem.

Consider the factor $\sum_{j=0}^{|C_w|-1} (\det w)^j e^{2\pi i Dj / d}$
appearing in \eqref{eq: general explicit expression}.
Observe that $\det w$ is a $|C_w|$-th root of unity, as is $e^{2\pi i D/d}$,
since $d$ divides $|C_w|$.
Let $\zeta = (\det w) e^{2 \pi i D / d}$.
We are considering the factor $\sum_{j=0}^{|C_w|-1} \zeta^j$,
which is $|C_w|$ if $\zeta=1$, otherwise $0$.
When $L$ is a regular vector, $\alt(L^D) \neq 0$,
so necessarily $\zeta = 1$ and the sum is $|C_w|$.
Then the explicit expression simplifies to
\begin{equation}\label{eq: simplified general explicit expression}
  \alt(L^D) = \frac{|C_w|}{|W|} \sum_{\sigma \in W/C_w} (\det \sigma) (\sigma \cdot L)^D .
\end{equation}
Hence
\[
  f_W = C \sum_{\sigma \in W/C_w} (\det \sigma) (\sigma \cdot L)^D,
\]
for some nonzero scalar $C$.
Note that $f_W$ is only defined up to a scalar factor.

\section{Examples}\label{section: examples}

\subsection{The Vandermonde determinant}

The classical Vandermonde determinant $\prod_{1 \leq i < j \leq n} (x_i-x_j)$
is the fundamental skew invariant of the action
of $W=S_n$ on $\mathbb{C}^n$ by permuting coordinates.
This is an irreducible real reflection group with reflection hyperplanes
defined by the polynomials $x_i-x_j$ for $i\neq j$;
hence its fundamental skew invariant is the Vandermonde determinant
$f_{S_n}=\prod_{i<j} (x_i-x_j)$, of degree $\binom{n}{2}$.

This action is non-essential as it fixes the line spanned by $(1,\dotsc,1)$.
Passing to an essential action,
say by restricting to the subspace defined by $x_1+\dotsc+x_n=0$,
would correspond to changing $f_{S_n}$ by a substitution
such as replacing $x_n$ with $-x_1-\dotsb-x_{n-1}$.

The degrees (other than $1$) are $2, 3, \ldots, n$,
and since $S_n$ is an irreducible real reflection group, the highest degree $n$ is a regular number.
Explicitly, the linear form
$L=\sum_{j=1}^n e^{2\pi i (j-1)/n}x_j$ is a regular element for which the
$n$-cycle $(12\cdots n)$ acts by multiplication by $e^{2\pi i/n}$.

This implies that the Waring rank of the Vandermonde determinant is
$(n-1)!$, with an explicit expression given by
\begin{equation}\label{eq: Vandermonde explicit}
\begin{split}
  \prod_{i<j} (x_i-x_j) &= C \sum_{\sigma\in S_n/C_n}
      \sgn(\sigma) (\sigma \cdot L)^{\binom{n}{2}} \\
    &= C \sum_{\sigma\in S_n/C_n}
      \sgn(\sigma) \left( \sum_{j=1}^n e^{2\pi i (j-1)/n} x_{\sigma(j)} \right)^{\binom{n}{2}}, \\
\end{split}
\end{equation}
for some scalar $C$, see below,
where $C_n$ is the cyclic subgroup generated by $(12\cdots n)$
and $\sigma$ denotes a choice of coset representatives in $S_n/C_n$.
For example, the sum can be taken over permutations $\sigma$
such that $\sigma(1)=1$.

The cactus rank and smoothable rank are also equal to $(n-1)!$.

By the Alexander--Hirschowitz theorem, the rank of a general form
of degree $\binom{n}{2}$ in $n$ variables is
\[
  \left\lceil \frac{1}{n} \binom{ \binom{n}{2} + n - 1 }{ n-1 } \right\rceil
\]
which is greater than $(n-1)!$ for $n>1$.
Thus the Vandermonde determinant $f_{S_n}$ has less than general rank,
and in fact the ratio of $r(f_{S_n})$ to the general rank rapidly approaches zero
as $n$ goes to infinity.

If we work instead with the essential action of $S_n$ on $\C^{n-1}$,
the rank of the fundamental skew invariant should be compared to the generic rank
of a form in one less variable.
This still grows far more rapidly than $r(f_{S_n})$.

Now we find the scalar $C$ in \eqref{eq: Vandermonde explicit}.
Consider the monomial $x_1^0 x_2^1 \dotsm x_n^{n-1}$.
Its coefficient on the left hand side of \eqref{eq: Vandermonde explicit}
is $(-1)^{\binom{n}{2}}$.
Its coefficient in the power sum on the right hand side is
\[
  \sum_{\sigma \in S_n/C_n} (\sgn \sigma) \binom{\binom{n}{2}}{1,2,\dotsc,n-1}
    \prod_{j=1}^n \left( e^{2\pi i (\sigma^{-1}(j)-1)/n} \right)^{j-1} .
\]
We fix the choice of coset representatives to be the set of $\sigma \in S_n$ such that $\sigma(1) = 1$.
For convenience let $\alpha$ be a primitive $n$th root of unity, say $e^{2\pi i / n}$.
Let $M_n$ be the multinomial coefficient $\binom{\binom{n}{2}}{1,2,\dotsc,n-1}$.
Reindexing by $\sigma^{-1}$ instead of $\sigma$,
we get
\[
\begin{split}
  \frac{1}{C} &= (-1)^{\binom{n}{2}} M_n
    \sum_{\substack{\sigma \in S_n \\ \sigma(1) = 1}} (\sgn \sigma) \prod_{j = 2}^n \alpha^{(j-1)(\sigma(j)-1)} \\
    &= (-1)^{\binom{n}{2}} M_n \det
      \begin{pmatrix}
        \alpha & \alpha^2 & \dots & \alpha^{n-1} \\
        \alpha^2 & \alpha^4 & \dots & \alpha^{2(n-1)} \\
        \vdots & \vdots & & \vdots \\
        \alpha^{n-1} & \alpha^{2(n-1)} & \dots & \alpha^{(n-1)^2}
      \end{pmatrix} \\
    &= (-1)^{\binom{n}{2}} M_n \alpha^{\binom{n}{2}} \prod_{1 \leq j < k \leq n-1} (\alpha^k - \alpha^j) \\
    &= (-1)^{\binom{n}{2} + n + 1} M_n \prod_{1 \leq j < k \leq n-1} (\alpha^k - \alpha^j) .
\end{split}
\]

We now need to calculate $\prod_{1 \leq j < k \leq n-1} (\alpha^k - \alpha^j)$, which we denote by $P$.
We will calculate $|P|$ and $\arg P$ separately.
This calculation has most likely been known since the nineteenth century,
especially the calculation of $|P|^2$, which is (up to sign) the discriminant of the polynomial $x^{n-1}+x^{n-2}+\cdots+x+1$,
but we could not find a simple reference.
Let $V$ be the Vandermonde matrix
$$V=      \begin{pmatrix}
        1 & 1 & 1 & \dots & 1 \\
        \alpha & \alpha^2 & \alpha^3 & \dots & \alpha^{n-1} \\
        \alpha^2 & \alpha^4 & \alpha^6 & \dots & \alpha^{2(n-1)} \\
        \vdots & \vdots & \vdots & & \vdots \\
        \alpha^{n-2} & \alpha^{2(n-2)} & \alpha^{3(n-2} & \dots & \alpha^{(n-1)(n-2)}
      \end{pmatrix},$$
so that $P=\det V$.  Then $|P|^2=\det (VV^\dagger)$, where $V^\dagger$ is the conjugate transpose of $V$.  It is easy to see that
$$VV^\dagger = \begin{pmatrix}
n-1 & -1 & -1 & \dots & -1 \\
-1 & n-1 & -1 & \dots & -1 \\
\vdots & \vdots & \vdots & & \vdots \\
-1 & -1 & -1 & \dots & n-1
\end{pmatrix}.$$
This is the deleted Laplacian of the complete graph on $n$ vertices, so by the Matrix--Tree Theorem, $\det(VV^\dagger)$ counts the number of spanning trees of the complete graph.  Spanning trees of the complete graph are equivalent to trees on $n$ labelled vertices, and there are $n^{n-2}$ such trees.  Hence $|P|=\sqrt{n^{n-2}}$.

To calculate $\arg P$, we sum up $\arg (\alpha^k-\alpha^j)$ for $1\leq j<k\leq n-1$.
Note that $|\alpha^k|=|-\alpha^j|$, so $\arg(\alpha^k-\alpha^j)=(\arg(\alpha^k)+\arg(-\alpha^j))/2$
as long as we are careful to take arguments which are within $\pi$ of each other.
Since $1 \leq j < k \leq n-1$, we have $2\pi k/n - \pi < 2\pi j/n + \pi < 2\pi k/n + \pi$.
So $\arg(-\alpha^j) = 2\pi j/n + \pi$ is within $\pi$ of $\arg(\alpha^k) = 2\pi k/n$.
Hence
\[
  \arg(\alpha^k-\alpha^j)=\frac{(2\pi k/n)+(2\pi j/n)+\pi}{2},
\]
and
\[
\begin{split}
  \arg P &= \sum_{1\leq j<k\leq n-1} \frac{(2\pi k/n)+(2\pi j/n)+\pi}{2}\\
    &=  \pi\left(\sum_{1\leq j<k\leq n-1} \frac{j+k}{n}\right) + \frac{(n-1)(n-2)\pi}{4}\\
    &=  \pi\left(\sum_{\ell=1}^{n-1} \frac{(n-2)\ell}{n}\right) + \frac{(n-1)(n-2)\pi}{4}\\
    &= \frac{3(n-1)(n-2)\pi}{4}.
\end{split}
\]

Hence, $P=(-i)^{\binom{n-1}{2}}\sqrt{n^{n-2}}$, and
$$\frac{1}{C}=i^{\binom{n-1}{2}} M_n \sqrt{n^{n-2}}.$$
Therefore, we have the following expression for the Vandermonde determinant.
\[
  \prod_{i<j} (x_i-x_j) =
    \frac{(-i)^{\binom{n-1}{2}}}{\binom{\binom{n}{2}}{1,2,\ldots,n-1}\sqrt{n^{n-2}}}
                      \sum_{\sigma\in S_n/C_n} \sgn(\sigma)
                      \left( \sum_{j=1}^n e^{2\pi i (j-1)/n} x_{\sigma(j)} \right)^{\binom{n}{2}}.
\]

\subsection{Types B and D}

The group $B_n$ of order $2^nn!$ acts on $\C^n$, and its fundamental
skew invariant is $f_{B_n}=\prod_{i=1}^n x_i \prod_{i<j}
(x_i-x_j)(x_i+x_j)$, of degree $n + 2 \binom{n}{2} = n^2$.
Its highest degree is $2n$.
In the realization of $B_n$ as signed permutations,
one choice of regular element of
order $2n$ is the signed permutation $v\in B_n$ with $v(i)=i+1$ for
$1\leq i<n$ and $v(n)=\overline{1}$, so in its action on $\C^n$,
$v\cdot x_i=x_{i+1}$ for $i<n$ and $v\cdot x_n = -x_1$.  The linear
form $L=\sum_{j=1}^n e^{2\pi i (j-1)/2n} x_j$ is a regular form on
which $v$ acts by multiplication by $e^{2\pi i/2n}$.

Thus the rank $r(f_{B_n}) = 2^{n-1}(n-1)!$, and an explicit expression
is given by
\[
  \prod_{i=1}^n x_i \prod_{i<j} (x_i-x_j)(x_i+x_j)
  = C \sum_{\sigma\in B_n/C_n} 
    \sgn(\sigma) (\sigma \cdot L)^{n^2},
\]
for some scalar $C$,
where $C_n$ is the cyclic subgroup
generated by $v$ and the $\sigma$ are coset representatives for $B_n/C_n$.
For example we may take $\{\sigma \in B_n \mid \sigma(1) = 1\}$.

The group $D_n$ of order $2^{n-1}n!$ acts on $\C^n$ as a real reflection group
with fundamental skew invariant $f_{D_n}=\prod_{i<j} (x_i-x_j)(x_i+x_j)$,
of degree $2 \binom{n}{2} = n(n-1)$.
Its highest degree (assuming $n\geq 3$) is $2n-2$.
Thus $r(f_{D_n}) = 2^{n-2} n (n-2)!$.
One choice of regular element
of order $2n-2$ is the signed permutation $v\in D_n$ with $v(1)=\overline{1}$,
$v(i)=i+1$ for $2\leq i\leq n-1$, and $v(n)=\overline{2}$, so in its action on
$\C^n$, $v\cdot x_1=-x_1$, $v\cdot x_i=x_{i+1}$ when $2\leq i\leq
n-1$, and $v\cdot x_n=-x_2$.
The linear form $L=\sum_{j=2}^n e^{2\pi i (j-2)/(2n-2)} x_j$
is a regular form for which $v$ acts by
multiplication by $e^{2\pi i/(2n-2)}$.
Thus
\[
  \prod_{i<j} (x_i-x_j)(x_i+x_j)
  = C \sum_{\sigma \in D_n/C_n}
    \sgn(\sigma) (\sigma \cdot L)^{n(n-1)},
\]
for some scalar $C$,
where $C_n$ is the cyclic subgroup generated by $v$
and the $\sigma$ are coset representatives for $D_n/C_n$.

As these groups are irreducible real reflection groups,
the cactus, smoothable, and Waring ranks are equal.

By the Alexander--Hirschowitz theorem, the rank of a general form
of degree $n^2$, respectively~$n(n-1)$, in $n$ variables is
\[
  \left\lceil \frac{1}{n} \binom{n^2 + n - 1}{n-1} \right\rceil,
  \qquad
  \text{respectively}
  \qquad
  \left\lceil \frac{1}{n} \binom{n(n-1) + n - 1}{n-1} \right\rceil,
\]
and these are greater than $2^{n-1}(n-1)!$, respectively\ $2^{n-2} n (n-2)!$.
Again $f_{B_n}$ and $f_{D_n}$ have less than general rank,
and ratios of their ranks to the general ranks go to zero.

\subsection{Exceptional groups}

We list the fundamental skew invariant, its degree, and its rank
for the exceptional real reflection groups $E_8$, $E_7$, $E_6$, $F_4$, $H_4$, and $H_3$.
Here, we regard $E_7$ and $E_6$ as acting non-essentially on $\C^8$
(as subgroups of $E_8$).

The fundamental skew invariants of these groups are in Table~\ref{tab:excep_skew_invar}, with $\beta=\cos(\pi/5)$.

\begin{table}[htbp]
\centering
\[
\begin{array}{ll}
W & f_W \\
\toprule
E_8 &
  \displaystyle \prod_{1 \leq i < j \leq 8} (x_i-x_j)(x_i+x_j)
  \prod_{\substack{\lambda_i = \pm 1 \\ \prod \lambda_i = 1 \\ \lambda_1 = 1}} \sum_{i=1}^8 \lambda_i x_i \\
E_7 &
  \displaystyle (x_1+x_8) \prod_{2 \leq i < j \leq 7} (x_i-x_j)(x_i+x_j)
  \prod_{\substack{\lambda_i = \pm 1 \\ \prod \lambda_i = 1 \\ \lambda_1 = \lambda_8 = 1}} \sum_{i=1}^8 \lambda_i x_i \\
E_6 &
  \displaystyle \prod_{3 \leq i < j \leq 7} (x_i-x_j)(x_i+x_j)
  \prod_{\substack{\lambda_i = \pm 1 \\ \prod \lambda_i = 1 \\ \lambda_1 = \lambda_2 = \lambda_8 = 1}} \sum_{i=1}^8 \lambda_i x_i \\
F_4 &
  \displaystyle x_1 x_2 x_3 x_4 \prod_{1 \leq i < j \leq 4} (x_i-x_j)(x_i+x_j)
  \prod_{\substack{\lambda_i = \pm 1 \\ \lambda_1 = 1}} \sum_{i=1}^4 \lambda_i x_i \\
H_3 &
  \displaystyle x_1 x_2 x_3 \prod_{\substack{\sigma \in S_3 \\ \sigma \text{ even} }}
    \prod_{\substack{ \lambda_i = \pm 1 \\ \lambda_1 = 1 }}
    \Big( (\beta+\frac{1}{2}) \lambda_1 x_{\sigma 1} + \lambda_2 \beta x_{\sigma 2} + \lambda_3 \frac{1}{2} x_{\sigma 3} \Big) \\
H_4 &
  \displaystyle x_1 x_2 x_3 x_4
    \prod_{\substack{\sigma \in S_4 \\ \sigma \text{ even} }}
    \prod_{\substack{ \lambda_i = \pm 1 \\ \lambda_1 = 1 }}
    \Big( (\beta+\frac{1}{2}) \lambda_1 x_{\sigma 1} + \lambda_2 \beta x_{\sigma 2} + \lambda_3 \frac{1}{2} x_{\sigma 3} + 0 x_{\sigma 4} \Big)
    \prod_{\substack{\lambda_i = \pm 1 \\ \lambda_1 = 1}} \sum \lambda_i x_i
\end{array}
\]
\caption{Fundamental skew invariants for exceptional groups.}
\label{tab:excep_skew_invar}
\end{table}

In Table~\ref{tab:excep_rank}, we list, for each group, the number of variables, degree, and rank of its fundamental skew invariant,
along with the general rank for forms in that number of variables and of that degree
and the quotient of the rank by the general rank.
\begin{table}[htbp]
\centering
\[
\begin{array}{lrrrrl}
W & \text{variables} & \deg(f_W) & r(f_W) & \text{general rank} & \text{quotient} \\
\toprule
E_8 & 8 & 120 & 64 \cdot 9! & 11169551972 & 2.08 \cdot 10^{-3} \\
E_7 & 8 & 63 & 4 \cdot 8! & 149846840 & 1.08 \cdot 10^{-3} \\
E_6 & 8 & 36 & 6 \cdot 6! & 4028015 & 1.07 \cdot 10^{-3} \\
F_4 & 4 & 24 & 96 & 732 & 0.131 \\
H_4 & 4 & 60 & 480 & 9928 & 0.0242 \\
H_3 & 3 & 15 &  12 &   46 & 0.261
\end{array}
\]
\caption{Comparison of ranks for skew invariants of real exceptional groups with general rank.}
\label{tab:excep_rank}
\end{table}
Since we have written $f_{E_7}$ and $f_{E_6}$ as polynomials in $8$ variables,
we have compared their ranks with the general ranks of forms in that many variables.
One may instead pass to $7$ or $6$ variables respectively.

Finally, briefly,
the dihedral groups $I_2(m)$ (including $G_2 = I_2(6)$) have fundamental skew invariant $f_{I_2(m)} = x^m - y^m$
in a suitable choice of coordinates, whence the Waring rank is clearly $2$.

\subsection{Imprimitive Groups}

We find the following rank of a natural generalization of the Vandermonde determinant, and of the case $D_n$:
\[
  r \left( \prod_{1 \leq i < j \leq n} (x_i^e - x_j^e) \right) = e^{n-2} n (n-2)! \, .
\]
Indeed, this polynomial is the fundamental skew invariant of a certain imprimitive reflection group.
See \cite[Chapter 2]{Lehrer-Taylor} for information on these groups.
These groups are denoted $G(de,e,n)$ for positive integers $d,e,n$.
By \cite[Lemma~2.8]{Lehrer-Taylor}, the fundamental skew invariant of $G(de,e,n)$ is
\[
  \prod_i x_i^{d-1} \prod_{i < j} (x_i^{de} - x_j^{de}) .
\]
This has degree $n(d-1) + \binom{n}{2} de$.
The polynomial displayed above is the fundamental skew invariant of $G(e,e,n)$.
(Note that $A_n = G(1,1,n)$, $B_n = G(2,1,n)$, and $D_n = G(2,2,n)$.)

The degrees of $G(de,e,n)$ are $de, 2de, \dotsc, (n-1)de$, and $nd$.
The largest degree is a regular number except in the cases
$d,e,n \geq 2$ and at least one of $e > 2$ or $n > 2$;
we exclude these cases.
We also exclude the cases $n \leq 2$ since the resulting polynomials are binary
and their ranks are known by Sylvester's results.
So we assume that $n > 2$ and the largest degree is a regular number;
thus $d=1$ or $e \leq 2$.
If $e = 1$ then the greatest degree is $nd$ and it is a regular number.
Hence the Waring rank (and cactus rank and smoothable rank) of the fundamental skew invariant is
\[
  (d)(2d)\dotsm((n-1)d) = d^{n-1} (n-1)! \, .
\]
If $e > 1$ then the greatest degree is $(n-1)de$ and it is a regular number.
Hence the Waring rank (and cactus rank and smoothable rank) of the fundamental skew invariant is
\[
  (de)(2de)\dotsm((n-2)de)(n) = (de)^{n-2} n (n-2)! \, .
\]

\subsection{Monomials}

The monomial $x_1^{a_1}\cdots x_n^{a_n}$, $a_1 \leq \dotsb \leq a_n$, is the fundamental
skew invariant for the reflection group
$\Z/(a_1+1)\Z \times \dotsb \times \Z/(a_n+1)\Z$,
where the generator of the $j$-th component
$\Z/(a_j+1)\Z$ acts on $x_j$ by multiplication by $e^{-2\pi i/(a_j+1)}$.

The degrees for this group are $d_i=a_i+1$.
We immediately recover the result of Ranestad and Schreyer
that the rank, smoothable rank, and cactus rank satisfy
\[
  r(x_1^{a_1} \dotsm x_n^{a_n}) \geq sr(x_1^{a_1} \dotsm x_n^{a_n}) = cr(x_1^{a_1} \dotsm x_n^{a_n}) = (a_1+1)\dotsm(a_{n-1}+1),
\]
see \cite{MR2842085}.
However, all of the codegrees are $0$;
since the representation $V^*$ decomposes into $n$
distinct $1$-dimensional representations,
we have
\[
  \dim_\C \Hom(V^*,\Sym\nolimits^1(V^*)) = \dim_\C \Hom(V^*,V^*) = n ,
\]
so all the coexponents are $1$.
Therefore, according to Theorem \ref{thm: regular number criterion},
the only case where the largest degree $a_n+1$ is a
regular number is when all the $a_i$, and hence all the degrees, are
equal.

Alternatively, note that the regular numbers of $\Z/(a_i+1)\Z$
are just the divisors of $a_i+1$,
and recall the the regular numbers of a reducible reflection group
are precisely the numbers that are regular for each of the irreducible components.
Thus the greatest regular number of $\Z/(a_1+1)\Z \times \dotsb \times \Z/(a_n+1)\Z$
is $\gcd(a_1+1,\dotsc,a_n+1)$.
Again, this is equal to the largest degree $a_n+1$ if and only if all the $a_i$ are equal.

In particular, we recover the observation of Ranestad and Schreyer
that $r((x_1\dotsm x_n)^d) = (d+1)^{n-1}$ \cite{MR2842085}.

For other monomials, the lower bound of Ranestad and Schreyer is not tight,
and in most cases our upper bound also is not tight.
Indeed, as mentioned in the introduction,
Carlini, Catalisano, and Geramita show
that $r(x_1^{a_1} \dotsm x_n^{a_n}) = (a_2+1)\dotsm(a_n+1)$
when $a_1 \leq \dotsb \leq a_n$ \cite{Carlini20125}.
Regarding upper bounds,
our construction in Proposition \ref{prop: upper bound}
gives a tight upper bound only in the case
$a_1+1$ divides $a_j+1$ for all $j$.

See \cite[Section 4.1]{Carlini20125}
and \cite{Holmes:2014hl}
for a comparison of the ranks of monomials
with general ranks.

It is interesting to note that the explicit expression for
$x_1^{a_1} \dotsm x_n^{a_n}$ as a sum of powers
given in \cite{MR3017012} (which is not the only such expression)
is exactly the skew-symmetrization of $(x_1+\dotsb+x_n)^D$,
$D = a_1+\dotsb+a_n$,
over the subgroup $\{1\} \times \Z/(a_2+1)\Z \times \dotsb \times \Z/(a_n+1)\Z$
rather than over the whole group.
Furthermore $x_1+\dotsb+x_n$ is not necessarily an eigenvector of any element in the group,
depending on whether the $a_i+1$ have a common divisor.

\bigskip

The proof of Theorem~\ref{thm: regular number criterion}
by Lehrer and Michel~\cite{Lehrer-Michel}
and the determination of the ranks of monomials in~\cite{Carlini20125,MR3017012}
both involve the algebraic geometry of the covariant ring,
so we speculate that combining these ideas appropriately
could lead to a method to determine the rank of the fundamental skew invariant
for an arbitrary complex reflection group even in the case
where the largest degree is not a regular number.

%

\subsection*{Acknowledgements}
We thank Vic Reiner and Hirotachi Abo for helpful comments.
We also thank the anonymous referees for numerous helpful comments and suggestions
which significantly improved the paper.

\bigskip


\begin{thebibliography}{HPST14}

\bibitem[AH95]{MR1311347}
J.~Alexander and A.~Hirschowitz, \emph{Polynomial interpolation in several
  variables}, J. Algebraic Geom. \textbf{4} (1995), no.~2, 201--222.
  \MR{1311347 (96f:14065)}

\bibitem[BBM12]{Bernardi:2012fk}
Alessandra Bernardi, J{{\'e}}r{{\^o}}me Brachat, and Bernard Mourrain, \emph{A
  comparison of different notions of ranks of symmetric tensors},
  \href{http://arxiv.org/abs/1210.8169v2}{\nolinkurl{arXiv:1210.8169}}
  [math.AG], Oct 2012.

\bibitem[BB14]{Buczynska:2014dq}
Weronika Buczy{{\'n}}ska and Jaros{{\l}}aw Buczy{{\'n}}ski, \emph{Secant
  varieties to high degree {V}eronese reembeddings, catalecticant matrices and
  smoothable {G}orenstein schemes}, J. Algebraic Geom. \textbf{23} (2014),
  63--90.

\bibitem[BBT13]{MR3017012}
Weronika Buczy{\'n}ska, Jaros{\l}aw Buczy{\'n}ski, and Zach Teitler,
  \emph{Waring decompositions of monomials}, J. Algebra \textbf{378} (2013),
  45--57. \MR{3017012}

\bibitem[BR11]{MR2813511}
David Bessis and Victor Reiner, \emph{Cyclic sieving of noncrossing partitions
  for complex reflection groups}, Ann. Comb. \textbf{15} (2011), no.~2,
  197--222. \MR{2813511 (2012g:20075)}

\bibitem[CCG12]{Carlini20125}
Enrico Carlini, Maria~Virginia Catalisano, and Anthony~V. Geramita, \emph{The
  solution to the {W}aring problem for monomials and the sum of coprime
  monomials}, J. Algebra \textbf{370} (2012), 5--14.

\bibitem[Che55]{Chevalley}
Claude Chevalley, \emph{Invariants of finite groups generated by reflections},
  Amer. J. Math. \textbf{77} (1955), 778--782. \MR{0072877 (17,345d)}

\bibitem[CM96]{comonmour96}
P.~Comon and B.~Mourrain, \emph{Decomposition of quantics in sums of powers of
  linear forms}, Signal Processing, Elsevier 53(2), 1996.

\bibitem[CS11]{MR2754189}
Gonzalo Comas and Malena Seiguer, \emph{On the rank of a binary form}, Found.
  Comput. Math. \textbf{11} (2011), no.~1, 65--78. \MR{2754189}

\bibitem[HPST14]{Holmes:2014hl}
Erik Holmes, Paul Plummer, Jeremy Siegert, and Zach Teitler, \emph{Maximum
  {W}aring ranks of monomials},
  \href{http://arxiv.org/abs/1309.7834}{\nolinkurl{arXiv:1309.7834}} [math.AG],
  Apr 2014.

\bibitem[Hum90]{humphries}
James~E. Humphreys, \emph{Reflection {G}roups and {C}oxeter {G}roups},
  Cambridge studies in advanced mathematics, Cambridge University Press, 1990.

\bibitem[IK99]{MR1735271}
Anthony Iarrobino and Vassil Kanev, \emph{Power sums, {G}orenstein algebras,
  and determinantal loci}, Lecture Notes in Mathematics, vol. 1721,
  Springer-Verlag, Berlin, 1999, Appendix C by Iarrobino and Steven L. Kleiman.
  \MR{1735271 (2001d:14056)}

\bibitem[Kan01]{Kane}
Richard Kane, \emph{Reflection groups and invariant theory}, CMS Books in
  Mathematics/Ouvrages de Math\'ematiques de la SMC, 5, Springer-Verlag, New
  York, 2001. \MR{1838580 (2002c:20061)}

\bibitem[Lan11]{Ltensor}
J.M. Landsberg, \emph{Tensors: Geometry and applications}, Graduate Studies in
  Mathematics, vol. 128, AMS, Providence, 2011.

\bibitem[LM03]{Lehrer-Michel}
Gustav~I. Lehrer and Jean Michel, \emph{Invariant theory and eigenspaces for
  unitary reflection groups}, C. R. Math. Acad. Sci. Paris \textbf{336} (2003),
  no.~10, 795--800. \MR{1990017 (2004d:13005)}

\bibitem[LS99]{Lehrer-Springer}
G.~I. Lehrer and T.~A. Springer, \emph{Reflection subquotients of unitary
  reflection groups}, Canad. J. Math. \textbf{51} (1999), no.~6, 1175--1193,
  Dedicated to H. S. M. Coxeter on the occasion of his 90th birthday.
  \MR{1756877 (2001f:20082)}

\bibitem[LT09]{Lehrer-Taylor}
Gustav~I. Lehrer and Donald~E. Taylor, \emph{Unitary reflection groups},
  Australian Mathematical Society Lecture Series, vol.~20, Cambridge University
  Press, Cambridge, 2009. \MR{2542964 (2010j:20056)}

\bibitem[LT10]{Landsberg:2009yq}
J.M. Landsberg and Zach Teitler, \emph{On the ranks and border ranks of
  symmetric tensors}, Found.\ Comp.\ Math. \textbf{10} (2010), no.~3, 339--366.

\bibitem[RS11]{MR2842085}
Kristian Ranestad and Frank-Olaf Schreyer, \emph{On the rank of a symmetric
  form}, J. Algebra \textbf{346} (2011), 340--342. \MR{2842085}

\bibitem[Spal95]{Spal95}
N. Spaltenstein, \emph{Coxeter classes of unitary reflection groups}, Invent.
  Math.  \textbf{119} (1995), 297--316.

\bibitem[Spr74]{Springer-regular-elements}
T.~A. Springer, \emph{Regular elements of finite reflection groups}, Invent.
  Math. \textbf{25} (1974), 159--198. \MR{0354894 (50 \#7371)}

\bibitem[ST54]{Shephard-Todd}
G.~C. Shephard and J.~A. Todd, \emph{Finite unitary reflection groups},
  Canadian J. Math. \textbf{6} (1954), 274--304. \MR{0059914 (15,600b)}

\bibitem[Ste64]{Steinberg}
Robert Steinberg, \emph{Differential equations invariant under finite
  reflection groups}, Trans. Amer. Math. Soc. \textbf{112} (1964), 392--400.
  \MR{0167535 (29 \#4807)}

\bibitem[Syl51]{Sylvester:1851kx}
J.J. Sylvester, \emph{An essay on canonical forms, supplement to a sketch of a
  memoir on elimination, transformation and canonical forms}, Mathematical
  Papers, George Bell, Fleet Street, 1851, ({\it Collected papers}, Vol.\ 1,
  Paper 34).

\bibitem[Wak06]{Wakefield:2006ys}
Max Wakefield, \emph{On the derivation module and apolar algebra of an
  arrangement of hyperplanes}, Ph.D. thesis, University of Oregon, 2006.

\bibitem[Yer32]{MR1506892}
J.~Yerushalmy, \emph{On the {C}onfiguration of the {N}ine {B}ase {P}oints of a
  {P}encil of {E}quianharmonic {C}ubics}, Amer. J. Math. \textbf{54} (1932),
  no.~2, 279--284. \MR{MR1506892}

\end{thebibliography}

\providecommand{\bysame}{\leavevmode\hbox to3em{\hrulefill}\thinspace}
\renewcommand{\MR}[1]{}

\bigskip

\end{document}